\theoremstyle{plain}
\newtheorem*{atw*}{Theorem}
\newtheorem{atw}{Theorem}[section]
\newtheorem*{alemat*}{Lemma}
\newtheorem{alemat}[atw]{Lemma}
\newtheorem{pro}[atw]{Proposition}
\newtheorem*{pro*}{Proposition}
\newtheorem{cor}[atw]{Corollary}
\newtheorem*{cor*}{Corollary}
\theoremstyle{definition}
\newtheorem{adf}[atw]{Definition}
\newtheorem*{adf*}{Definition}
\theoremstyle{remark}
\newtheorem{rem}{Remark}
\newtheorem*{rem*}{Remark}
\newtheorem{ex}{Example}
\newtheorem*{ex*}{Example}
\newcommand{\Z}{\mathbb{Z}}
\newcommand{\N}{\mathbb{N}}
\newcommand{\Q}{\mathbb{Q}}
\newcommand{\C}{\mathbb{C}}
\newcommand{\T}{\mathbb{T}}
\newcommand{\PP}{\mathbb{P}}
\newcommand{\ee}{{\bf e}}
\newcommand{\vv}{{\bf v}}
\newcommand{\xto}[1]{{\xrightarrow{#1}}}
\newcommand{\lin}{\operatorname{span}}
\newcommand{\rank}{\operatorname{rank}}
\title{Motivic Chern classes of configuration spaces}
\author{Jakub Koncki}
\address{Institute of Mathematics, University of Warsaw\\
Banacha 2, 02--097 Warsaw, Poland}
\email{j.koncki@mimuw.edu.pl}
\begin{document}
		
	\begin{abstract}
		We calculate the equivariant motivic Chern class for configuration space of a quasiprojective (maybe singular) variety and the space of vectors with different directions. We prove the formulas for generating series of these classes. We generalize the localization theorems results about Białynicki-Birula decomposition to acquire some stability for the motivic Chern classes of configuration spaces. 
	\end{abstract}

\maketitle	
	\section{Introduction}
	We consider complex quasiprojective varieties.
	Invariants of maps satisfying certain additivity property play an important role in the problem of generalization of characteristic classes to singular varieties.
	We are interested in such invariants $cl_*$ which assign to every map of quasiprojective varieties $f:X \to B$ an element of some generalized homology theory of $B$ and satisfy the following property:
	\begin{description}
		\item[The additivity property] For a closed subvariety $Y \subset X$ and its complement $U$, there is an equality 
		$$cl_*(f: X\to B)=cl_*(f_{|Y}: Y\to B)+cl_*(f_{|U}: U\to B)\,.$$
	\end{description}
	In the presence of a group action one can consider equivariant version of such classes with values in some generalized equivariant cohomology theory e.g. equivariant Chow groups, or equivariant K-theory of coherent sheaves. The action of an algebraic torus $\T=(\C^*)^r$ is of particular interest, due to the localization theorems.
	
	For a family of morphisms $f_i:X_i \to B_i$ indexed by natural numbers it is a common practice to consider generating series
	$$\sum_{i=0}^\infty cl_*(f_i)t^i$$
	in some appropriately defined ring of power series $R[[t]]$ (with $R$ a commutative ring with
	unit). It often turns out that the expression for whole generating series has simpler form than the individual components.
	
	One of the invariants satisfying additivity property is the motivic Chern class $mC$ and its homological counterpart the Hirzebruch
	class $T_{y*}$, which are introduced in \cite{BSY} (see also \cite{SYp} for a survey).
	It generalizes previously defined characteristic classes of singular varieties (e.g. the Chern-Schwartz-MacPherson class \cite{CSM}, \cite{Oh2} for the equivariant version).
	Lately the equivariant counterpart of the Hirzebruch class was
	defined in
	\cite{WeHir}
	as well as of the motivic Chern class in \cite{FRW,AMSS}.
	Consider quasiprojective complex varieties 
	with an algebraic action of a torus $\T=(\C^*)^r$.
	Denote the equivariant motivic Chern class by $mC^\T$. It assigns to every $\T$-equivariant map $f : X \to B$ of $\T$-varieties an element of $G^\T(B)[y]$, i.e. a polynomial in $y$ over the equivariant $K$-theory of coherent sheaves on $B$. It is uniquely defined by three properties (after \cite{FRW}, section 2.3):
\begin{description}
	\item[1. Additivity] Consider a closed $\T$-equivariant subvariety $Y \subset X$ and its complement~$U$. For any $\T$-equivariant map $f :X\to B$, there is an equality 
	$$mC^\T(f: X\to B)=mC^\T(f_{|Y}: Y\to B)+mC^\T(f_{|U}: U\to B)\,.$$
	
	\item[2. Functoriality] For a proper map $f:B\to B'$ we have $$mC^\T(X\stackrel{f\circ g}\to B')=f_*mC^\T(X\stackrel{g}\to B)\,.$$
	
	\item[3. Normalization] For a smooth variety $M$ we have $$mC^\T(id_M)=\lambda_y(T^*M)\,,$$
	where $\lambda_y$ is the Grothendieck $\lambda$ operation defined for any $\T$-vector bundle by formula
	$$\lambda_y(E):=\sum_{i=0}^{\rank E}[\Lambda^iE]y^i.$$
	
\end{description}
	For a smooth $\T$-variety $M$ the equivariant K-theory of coherent sheaves is isomorphic to the equivariant K-theory of vector bundles, which we denote by $K^\T(M)$. It allows to consider motivic Chern classes $mC^\T(X\to M)$ as elements of $K^\T(M)[y]$.
	
	The goal of this note is to compute and study behaviour of the (torus equivariant) motivic Chern classes of configuration spaces.
	We consider ordered configuration spaces 
		$$Conf_k(B)=\{(x_1,...,x_k)\in B^k| x_i\neq x_j\}$$
	of a quasiprojective (maybe singular) variety $B\subset M$ embedded
	(equivariantly) as a locally closed subset into an ambient
	smooth variety $M$,
	and the space of pairwise linearly independent nonzero vectors
	$$C_k(\C^n):= \{ (\vv_1,\dots , \vv_k)\in (\C^n)^k|\forall_i \vv_i\neq 0 \text{, } \lin (\vv_i) \neq \lin(\vv_j)\}\,.$$
	Both of these spaces are open varieties with singular complement. We consider only ordered configurations. For classes of symmetric products and unordered
	configuration spaces see \cite{CMOSSY,CMSSY,Bal,MSch,Oh} and others.
	We compute classes of inclusions
	$$
	Conf_k(B) \subset B^k\subset M^k \text{ and }	
	C_k(\C^n)\subset (\C^n)^k.
	$$
	In the subsequent sections we establish generating series and search for stability results. In section \ref{s:szer} we find a compact elegant form of several exponential
	generating series given by suitable exponents, e.g. for
	torus-equivariant motivic Chern classes localized at an isolated $\T$-fixed point $pt\in M$ in the ambient smooth $\T$-variety:
	\begin{atw*}
	The exponential generating series of the localized $\T$-equivariant motivic Chern classes
	$$\frac{mC^{\T}\left(Conf_k(B) \subset M^k\right)_{|pt^k}}{eu^{\T}\left(\{pt^k\} \subset M^k\right)}\in S^{-1}K^\T(pt^k)[y]$$
	where $eu^{\T}\left(\{pt^k\} \subset M^k\right)=\lambda_{-1}(T^*M^k)_{|pt^k}$ denotes the K-theoretical Euler class
	of the $\T$-equivariant normal bundle of the fixed point inclusion $\{pt^k\} \subset M^k$, is given by
		\begin{align*}
		&1+\sum_{k=1}^{\infty}\frac{t^k}{k!}\cdot
		\frac{mC^{\T}\left(Conf_k(B) \subset M^k\right)_{|pt^k}}{eu^{\T}\left(\{pt^k\} \subset M^k\right)}=
		\exp\left(\frac{mC^\T(B\subset M)_{|pt}}{eu^{\T}\left(\{pt\} \subset M\right)}\cdot\log(1+t)\right)
		\end{align*}
	\end{atw*}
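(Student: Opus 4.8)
The plan is to stratify $B^k$ by partial diagonals, apply the additivity axiom, evaluate each stratum's contribution by a self-intersection computation in equivariant $K$-theory, and observe that the resulting recursion is the classical one relating powers to falling factorials via Stirling numbers of the second kind; summing the exponential generating series then produces the binomial expression $\exp\!\big(c\log(1+t)\big)=(1+t)^c$.

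Setting up notation, write $E:=\lambda_{-1}(T^*M)_{|pt}=eu^{\T}(\{pt\}\subset M)$, so that $eu^{\T}(\{pt^k\}\subset M^k)=E^k$ by multiplicativity of $\lambda_{-1}$ along $T^*M^k=\bigoplus_{i=1}^k\pi_i^*T^*M$. Put $b:=mC^{\T}(B\subset M)_{|pt}$, $c:=b/E$, $a_m:=mC^{\T}(Conf_m(B)\subset M^m)_{|pt^m}$ and $\alpha_m:=a_m/E^m$, with the convention $a_0=\alpha_0=1$; the asserted identity is then $\sum_{k\ge 0}\frac{t^k}{k!}\alpha_k=\exp\!\big(c\log(1+t)\big)$. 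For a set partition $P=\{P_1,\dots,P_m\}$ of $\{1,\dots,k\}$ let $\Delta_P\subset B^k$ be the locally closed subset on which $x_i=x_j$ precisely when $i$ and $j$ lie in a common block. Then $B^k=\bigsqcup_P\Delta_P$, the closure of each stratum is a union of strata, and choosing a representative in each block gives a $\T$-equivariant isomorphism $\Delta_P\cong Conf_m(B)$. Iterating the additivity axiom along this stratification gives
\[
mC^{\T}(B^k\subset M^k)=\sum_P mC^{\T}(\Delta_P\subset M^k).
\]

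To evaluate $mC^{\T}(\Delta_P\subset M^k)_{|pt^k}$ I factor the inclusion $\Delta_P\hookrightarrow M^k$ as the locally closed inclusion $Conf_m(B)\hookrightarrow M^m$ followed by the partial diagonal $\iota_P\colon M^m\hookrightarrow M^k$, which is a $\T$-equivariant closed, hence proper, immersion. The functoriality axiom gives $mC^{\T}(\Delta_P\subset M^k)=(\iota_P)_*\,mC^{\T}(Conf_m(B)\subset M^m)$. Now $pt^k=\iota_P(pt^m)$ and $\iota_P^{-1}(pt^k)=\{pt^m\}$, so restriction to $pt^k$ followed by the $K$-theoretic self-intersection formula $\iota_P^*(\iota_P)_*\xi=\lambda_{-1}(N_P^\vee)\cdot\xi$ for the conormal bundle $N_P^\vee$ of $\iota_P$ yields $mC^{\T}(\Delta_P\subset M^k)_{|pt^k}=\lambda_{-1}(N_P^\vee)_{|pt^m}\cdot a_m$. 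Since the small-diagonal normal bundle in each block of size $n_j:=|P_j|$ is $(TM)^{\oplus(n_j-1)}$, we have $(N_P)_{|pt^m}=(T_{pt}M)^{\oplus(k-m)}$ and hence $\lambda_{-1}(N_P^\vee)_{|pt^m}=E^{k-m}$, so the contribution depends only on the number of blocks $m$:
\[
mC^{\T}(\Delta_P\subset M^k)_{|pt^k}=a_m\,E^{k-m}.
\]
Combining this with the count $S(k,m)$ of partitions of $\{1,\dots,k\}$ into $m$ blocks and the multiplicativity of $mC^{\T}$ under external products (which gives $mC^{\T}(B^k\subset M^k)_{|pt^k}=b^k$), I obtain
\[
b^k=\sum_{m=1}^k S(k,m)\,a_m\,E^{k-m},\qquad\text{equivalently}\qquad c^k=\sum_{m=1}^k S(k,m)\,\alpha_m .
\]

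The array $\big(S(k,m)\big)$ is unitriangular, so this system is equivalent to $\alpha_m=c(c-1)(c-2)\cdots(c-m+1)$ by the classical identity $x^k=\sum_m S(k,m)\,x(x-1)\cdots(x-m+1)$; therefore $\sum_{k\ge 0}\frac{t^k}{k!}\alpha_k=\sum_{k\ge 0}\binom{c}{k}t^k=(1+t)^c=\exp\!\big(c\log(1+t)\big)$, which is the claim. (Alternatively, multiply the recursion by $t^k/k!$, sum over $k$, use $\sum_k S(k,m)\frac{t^k}{k!}=\frac{(e^t-1)^m}{m!}$ to get $e^{ct}=\sum_m\frac{\alpha_m}{m!}(e^t-1)^m$, and substitute $t\mapsto\log(1+t)$.) I expect the main work to lie in the middle step: checking that the partial-diagonal maps are proper equivariant morphisms with the stated normal bundles, that functoriality and the $K$-theoretic self-intersection formula apply in the equivariant coherent setting, that additivity can be iterated along the non-closed strata $\Delta_P$, and that $mC^{\T}$ is multiplicative under external products; once these are in place the combinatorics is routine.
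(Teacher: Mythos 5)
Your argument is correct, but it runs the M\"obius inversion on the partition lattice in the opposite direction from the paper. The paper first writes $[Conf_k(B)\to M^k]=\sum_{P\in X_k}a(P)[B_P\to M^k]$ with the explicit M\"obius coefficients $a(P)=\prod_{P_i\in P}(-1)^{|P_i|-1}(|P_i|-1)!$, computes each localized term as $c^{|P|}$ by exactly the functoriality-plus-self-intersection step you use (that part of your ``middle step'' appears verbatim in the paper, citing \cite{ChGi} Prop.~5.4.10 for $i_P^*i_{P*}$ and \cite{AMSS} for external multiplicativity), and then feeds the resulting sum over set partitions into a general exponential-formula lemma (Lemma \ref{lem:szeregi}) valid for an arbitrary block-weight function $f$. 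You instead stratify $B^k=\bigsqcup_P\Delta_P$ with $\Delta_P\cong Conf_{|P|}(B)$, obtain the unitriangular system $c^k=\sum_m S(k,m)\alpha_m$, and invert it by the classical identity $x^k=\sum_m S(k,m)\,x(x-1)\cdots(x-m+1)$, getting the closed form $\alpha_k=c(c-1)\cdots(c-k+1)$ from which the generating series is immediate. Your route avoids computing the coefficients $a(P)$ and the series manipulation of Lemma \ref{lem:szeregi}, and yields the pointwise formula $\alpha_k=\binom{c}{k}k!$ as a bonus. What the paper's route buys is uniformity: Lemma \ref{lem:szeregi} accepts an arbitrary function of the block size, which is what is needed for the non-localized Theorem \ref{s3} and especially for the orbit configuration spaces of Theorem \ref{s2}, where the one-block contribution is not a $u$-th power of a single class and a Stirling inversion would not produce falling factorials. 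The technical points you flag (properness of the partial diagonals, the equivariant self-intersection formula, iterating additivity over locally closed strata via the scissor relations in $K^\T(Var/M^k)$, and external multiplicativity of $mC^\T$) are precisely the ones the paper relies on, and they all hold.
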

	
	\begin{rem}
		Similar formulae also hold with the same proof for $\T$-equivariant MacPherson Chern classes $c_*^\T$
		and Hirzebruch classes $T_{y*}^\T$
		in (completed) equivariant (even degree) (co)homology or Chow groups,
		with the equivariant Euler class given by the corresponding top-dimensional
		equivariant Chern class of the normal bundle.
	\end{rem}
	For the orbit configuration space $C_k(\C^n)$ of pairwise linearly
	independent nonzero vectors in $\C^n$ we get similarly:
\begin{atw*}
	The exponential generating series of the classes $$\frac{mC^{\T_\alpha}\big(C_k(\C^n) \subset (\C^n)^k\big)}{eu^{\T_\alpha}\big(\{0\} \subset (\C^n)^k\big)}$$ is given by
	\begin{multline*}
	1+\sum_{k=1}^{\infty}\frac{t^k}{k!}\cdot
	\frac{mC^{\T_\alpha}\big(C_k(\C^n) \subset (\C^n)^k\big)}{eu^{\T_\alpha}\big(\{0\} \subset (\C^n)^k\big)} =
	\prod_{i=1}^{n}\exp\left( 
	\frac{\lambda_y(T^*\PP(\C^n))_{|\ee_i}}{\lambda_{-1}(T^*\PP(\C^n))_{|\ee_i}}
	\log\left(1+\frac{t(1+y)}{\alpha_i-1}\right)\right)
	\end{multline*}
	Here $\T_\alpha \subset GL_n(\C)$ is the corresponding diagonal subtorus,
	acting with weights $\alpha_1,...,\alpha_n$ on $\C^n$ and $\ee_1,...,\ee_n \in \PP(\C^n)$
	are the fixed points for the induced $\T$-action on $\PP(\C^n)$.
\end{atw*}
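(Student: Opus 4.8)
The plan is to pull the question back to the configuration space of $\PP(\C^n)$ through the blow-up of $\C^n$ at the origin, and then invoke the theorem on $Conf_k(B)\subset M^k$ above, one fixed point at a time.

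Let $E$ be the total space of $\mathcal O_{\PP(\C^n)}(-1)$, i.e. the blow-up of $\C^n$ at $0$, with its $\T_\alpha$-action; the tautological line $\mathcal O(-1)_{|\ee_i}=\C\cdot\ee_i$ carries weight $\alpha_i$. The product $b\colon E^k\to(\C^n)^k$ of blow-down maps is proper and restricts to an isomorphism over $(\C^n\setminus 0)^k$; since $C_k(\C^n)\subseteq(\C^n\setminus 0)^k$, it identifies $C_k(\C^n)$ with the locally closed subvariety $\widetilde C_k\subseteq E^k$, namely the preimage of $Conf_k(\PP(\C^n))$ under $(E\setminus\PP(\C^n))^k\to\PP(\C^n)^k$ (here $E\setminus\PP(\C^n)=\mathcal O(-1)\setminus 0\to\PP(\C^n)$). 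By functoriality, $mC^{\T_\alpha}(C_k(\C^n)\subset(\C^n)^k)=b_*\,mC^{\T_\alpha}(\widetilde C_k\subset E^k)$. The $\T_\alpha$-fixed points of $E^k$ are the zero vectors $p_{\mathbf i}$ over the fixed points $e_{\mathbf i}=(\ee_{i_1},\dots,\ee_{i_k})\in\PP(\C^n)^k$, $\mathbf i\in\{1,\dots,n\}^k$, while $(\C^n)^k$ has the single fixed point $0$. Applying the localization theorem in the form used for $M^k$ above (the fixed loci are proper) to the proper map $b$ gives
\[
\frac{mC^{\T_\alpha}(C_k(\C^n)\subset(\C^n)^k)}{eu^{\T_\alpha}(\{0\}\subset(\C^n)^k)}=\sum_{\mathbf i\in\{1,\dots,n\}^k}\frac{mC^{\T_\alpha}(\widetilde C_k\subset E^k)_{|p_{\mathbf i}}}{eu^{\T_\alpha}(\{p_{\mathbf i}\}\subset E^k)}.
\]

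Next I would evaluate each summand. Viewing $E^k$ as the total space of $\mathcal L=\bigoplus_{a=1}^k pr_a^*\mathcal O(-1)$ over $\PP(\C^n)^k$, the subvariety $\widetilde C_k$ is the complement, inside $\mathcal L$ restricted to $Conf_k(\PP(\C^n))$, of the union of the zero loci $\{v_a=0\}$. Expanding by inclusion--exclusion (additivity), and applying to each term the Verdier--Riemann--Roch formula for the smooth vector bundle projection together with functoriality for the zero-section embedding into $E^k$, the restriction of the $\widetilde C_k$-class to $p_{\mathbf i}$ comes out as
\[
mC^{\T_\alpha}(\widetilde C_k\subset E^k)_{|p_{\mathbf i}}=mC^{\T_\alpha}\bigl(Conf_k(\PP(\C^n))\subset\PP(\C^n)^k\bigr)_{|e_{\mathbf i}}\cdot\prod_{b=1}^k\bigl((1+y\alpha_{i_b}^{-1})-(1-\alpha_{i_b}^{-1})\bigr),
\]
the product appearing because the inclusion--exclusion sum $\sum_{T}(-1)^{|T|}\prod_{b\notin T}(1+y\alpha_{i_b}^{-1})\prod_{b\in T}(1-\alpha_{i_b}^{-1})$ factors over $b$; note $(1+y\alpha_{i_b}^{-1})-(1-\alpha_{i_b}^{-1})=\frac{1+y}{\alpha_{i_b}}$. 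Since $eu^{\T_\alpha}(\{p_{\mathbf i}\}\subset E^k)=\prod_b(1-\alpha_{i_b}^{-1})\cdot eu^{\T_\alpha}(\{e_{\mathbf i}\}\subset\PP(\C^n)^k)$, the $\mathbf i$-th summand equals $\prod_{b=1}^k\frac{1+y}{\alpha_{i_b}-1}$ times $mC^{\T_\alpha}(Conf_k(\PP(\C^n))\subset\PP(\C^n)^k)_{|e_{\mathbf i}}\big/eu^{\T_\alpha}(\{e_{\mathbf i}\}\subset\PP(\C^n)^k)$.

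Finally I would organize the sum over $\mathbf i$ by the multiplicities $m_i=\#\{b:i_b=i\}$, $\sum_i m_i=k$, and set $c_i=\lambda_y(T^*\PP(\C^n))_{|\ee_i}\big/\lambda_{-1}(T^*\PP(\C^n))_{|\ee_i}$. The localized configuration class at the generally mixed fixed point $e_{\mathbf i}$ depends only on the $m_i$ and factors as $\prod_i$ of the analogous class for $Conf_{m_i}(\PP(\C^n))\subset\PP(\C^n)^{m_i}$ at $\ee_i^{m_i}$ --- this ``clustering'' of configuration points at the fixed points is precisely what produces the exponential in the theorem above --- and by that theorem, with $B=M=\PP(\C^n)$ localized at $\ee_i$, each such factor equals $c_i(c_i-1)\cdots(c_i-m_i+1)$. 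Combining this with the multinomial count of the $\mathbf i$ having prescribed multiplicities, the exponential generating series becomes
\[
\sum_{m\in\N^n}\ \prod_{i=1}^n\frac{c_i(c_i-1)\cdots(c_i-m_i+1)}{m_i!}\Bigl(\frac{t(1+y)}{\alpha_i-1}\Bigr)^{m_i}=\prod_{i=1}^n\Bigl(1+\frac{t(1+y)}{\alpha_i-1}\Bigr)^{c_i},
\]
which is the asserted formula since $(1+u)^c=\exp\bigl(c\log(1+u)\bigr)$. The delicate points are the technical underpinnings of the two middle steps: the localization formalism on the non-complete variety $E^k$ and its compatibility with $b_*$, the Verdier--Riemann--Roch formula for $mC$ under smooth morphisms, and above all the clustering statement for $Conf_k$ at a mixed fixed point, which is the genuine conceptual ingredient and must be read off from the explicit description of $mC^{\T_\alpha}(Conf_k(B)\subset M^k)$.
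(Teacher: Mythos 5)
Your argument is correct, but it takes a genuinely different route from the paper's. The paper never leaves $(\C^n)^k$: it stratifies $C_k(\C^n)$ by the linear--dependence partitions, resolves the one-block stratum $B_k$ via the incidence variety in $(\C^n)^k\times\PP(\C^n)$, computes each localized contribution in explicit coordinates, and then obtains the exponential by feeding $f(u)=\sum_i\big(\tfrac{t(1+y)}{\alpha_i-1}\big)^u\,c_i$ into the combinatorial exponential-formula Lemma \ref{lem:szeregi} (Corollary \ref{cor:szer}). You instead blow up the origin once and for all, identify $C_k(\C^n)$ with the restriction of a $(\C^*)^k$-bundle over $Conf_k(\PP(\C^n))$, localize on $E^k$ via Lefschetz--Riemann--Roch, and reduce each fixed-point contribution to Theorem \ref{s1} for $B=M=\PP(\C^n)$; the exponential then comes from the already-established configuration series $(1+t)^{c_i}$ plus a multinomial resummation. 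Your route explains structurally why the answer is $\prod_i\big(1+\tfrac{t(1+y)}{\alpha_i-1}\big)^{c_i}$ (the paper only gestures at this in a remark about $C_k(\C^n)$ being a multiple affine cone over $Conf_k(\PP(\C^n))$), and it reuses Theorem \ref{s1} instead of redoing the partition combinatorics; the paper's route has the advantage of producing the unlocalized class with the extra $\T_\beta$-equivariance. The one step you leave implicit --- the ``clustering'' factorization of the localized $Conf_k(\PP(\C^n))$-class at a mixed fixed point $e_{\mathbf i}$ into $\prod_i (c_i)_{m_i}$ --- is genuinely needed and should be written out, but it does follow from formula (\ref{f1}) exactly as you indicate: only the partitions refining $P_{e_{\mathbf i}}$ contribute (a support argument for $i_{P*}$), the set of such partitions is a product over the blocks of $P_{e_{\mathbf i}}$, and $a(P)$, the Euler classes and the restricted classes are all multiplicative over blocks; this is precisely the computation of Example \ref{ex:pr}. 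With that spelled out, your proof is complete.
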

	In the section \ref{s:BB} we show that there is a connection between class of configuration space of a smooth, projective variety and the configuration space of its fixed points component, critical in Białynicki-Birula decomposition \cite{B-B1,B-B2,B-B3} for some one dimensional subtorus. Namely
	\begin{pro*}
		Let $\C^*$ be one dimensional subtorus of torus $\T$. Denote by $\T_1$ the quotient torus $\T/\C^*$. Let $M$ be a smooth, projective $\T$-variety. Let $F_1$ be the sink in the Białynicki-Birula decomposition according to the torus $\C^*$. Then
		$$
		mC^{\T_1}\left(Conf_k(F_1)\to F_1^k\right)=\lim_{{\bf t} \to 0}
		\left(mC^{\T}\left(Conf_k(M)\to M^k\right)_{|F_1^k}\right)
		$$
		for appropriately defined limit map (definition \ref{df:lim}). Analogues result holds when $F_1$ is the source.
	\end{pro*}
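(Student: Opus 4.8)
\emph{Plan of proof.} The plan is to combine the additivity of $mC^\T$ with the Białynicki-Birula decomposition of $M^k$ and then to invoke the Białynicki-Birula localization theorem for motivic Chern classes established earlier in this section. Write $M=\bigsqcup_{i\in I}X_i^+$ for the decomposition induced by $\C^*$, with $1\in I$ labelling the sink $F_1$, so that the cell $X_1^+$ is dense. The torus $\T$, and hence $\C^*$, acts diagonally on $Conf_k(M)\subset M^k$, and the Białynicki-Birula decomposition of the smooth projective variety $M^k$ for the diagonal $\C^*$-action is the product decomposition $M^k=\bigsqcup_{\mathbf{i}\in I^k}\prod_{j=1}^{k}X_{i_j}^+$; it is again filtrable, with dense attracting cell $(X_1^+)^k$ and sink $F_1^k$. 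Intersecting a filtration realizing this decomposition with the open $\T$-invariant subset $Conf_k(M)$ and applying the additivity axiom along the induced filtration gives
\[
mC^\T\big(Conf_k(M)\to M^k\big)=\sum_{\mathbf{i}\in I^k}mC^\T\Big(\big(\prod_{j}X_{i_j}^+\big)\cap Conf_k(M)\to M^k\Big).
\]

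Next I would show that, after restriction to $F_1^k$, every summand with $\mathbf{i}\ne(1,\dots,1)$ vanishes. For a locally closed $\T$-invariant $Z\subset M^k$, functoriality applied to the closed embedding $\overline{Z}\hookrightarrow M^k$ exhibits $mC^\T(Z\to M^k)$ as the push-forward of a class supported on $\overline{Z}$, so it restricts to zero on any $\T$-invariant closed subset disjoint from $\overline{Z}$. For $Z=\big(\prod_jX_{i_j}^+\big)\cap Conf_k(M)$ we have $\overline{Z}\subseteq\prod_j\overline{X_{i_j}^+}$; and since $F_1$ is the sink, its cell $X_1^+$ is open and contains $F_1$, so for $i\ne1$ the open neighbourhood $X_1^+$ of $F_1$ is disjoint from $X_i^+$ and hence $F_1\cap\overline{X_i^+}=\emptyset$. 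Thus $\prod_j\overline{X_{i_j}^+}$ is disjoint from $F_1^k$ whenever some $i_j\ne1$, so that summand restricts to $0$ on $F_1^k$, and in particular has zero image under the limit map of Definition \ref{df:lim}.

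It therefore remains to compute $\lim_{{\bf t}\to0}\big(mC^\T(Z_0\to M^k)_{|F_1^k}\big)$ for $Z_0=(X_1^+)^k\cap Conf_k(M)$, which lies in the big Białynicki-Birula cell $(X_1^+)^k$ of $M^k$, an affine bundle over its fixed component $F_1^k$. Here I would apply the Białynicki-Birula localization theorem for $mC^\T$: for a $\C^*$-invariant locally closed subset $W$ of the big cell of a smooth projective $\T$-variety, $\lim_{{\bf t}\to0}\big(mC^\T(W\to M^k)_{|F_1^k}\big)=mC^{\T_1}\big(W^{\C^*}\to F_1^k\big)$, where $W^{\C^*}$ is the $\C^*$-fixed locus. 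Since $F_1\subseteq X_1^+$ we have $F_1^k\subseteq(X_1^+)^k$, so the $\C^*$-fixed points of $Z_0$ (which lie in $(X_1^+)^k$) are exactly $Z_0\cap F_1^k=Conf_k(M)\cap F_1^k=Conf_k(F_1)$. Hence the surviving summand contributes $mC^{\T_1}\big(Conf_k(F_1)\to F_1^k\big)$, which together with the vanishing above yields the claimed identity. The case where $F_1$ is the source is deduced from this by replacing the $\C^*$-action with its inverse, under which source and sink interchange, the two Białynicki-Birula decompositions are swapped, and the limit ${\bf t}\to0$ becomes the appropriate limit for the source.

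The step I expect to be the main obstacle is reconciling the limit map of Definition \ref{df:lim} with the additive decomposition of the first paragraph: that map is assembled from restriction to $F_1^k$, a normalization by a $\lambda_y$-class of the conormal bundle, and a genuine limit in the $\C^*$-parameter, and none of these operations is formally additive. One must first establish that each of the finitely many limits in question exists — which uses the sink property to bound the $\C^*$-weights that can occur in $mC^\T$ of a subvariety of $(X_1^+)^k$ — and only then conclude that the limit of the sum is the sum of the limits. The second delicate point is the identification in the big-cell step: the Białynicki-Birula retraction $X_1^+\to F_1$ does not send distinct points to distinct points, so the limit is \emph{not} the push-forward of the image of $Z_0$; it is precisely the localization theorem that forces the answer to be the motivic Chern class of the genuinely smaller space $Conf_k(F_1)$. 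If only a more restrictive form of the localization theorem were available — say, for $M$ itself rather than for arbitrary invariant subsets of the big cell — one would instead decompose $Z_0$ further along the diagonal arrangement inside $(X_1^+)^k$, reduce to products of Białynicki-Birula cells, and evaluate those limits directly using the affine-bundle structure.
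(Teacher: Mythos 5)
The central step of your argument---the claim that for a $\C^*$-invariant locally closed $W$ inside the big cell one has $\lim_{{\bf t}\to 0}\big(mC^\T(W\to M^k)_{|F_1^k}\big)=mC^{\T_1}(W^{\C^*}\to F_1^k)$---is not the localization theorem proved in this paper, and it is false. Theorem \ref{twABB} computes the limit of $mC^{\T}(f\colon X\to M)_{|F}\,/\,eu^{\T}(\nu_F)$ as $mC^{\T_1}\big(f^{-1}(M_F^+)\to F\big)$, i.e.\ the class of the preimage of the \emph{whole attracting cell}, carried to $F$ by the Bia{\l}ynicki--Birula retraction, not the class of the $\C^*$-fixed locus. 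Already for $M=\PP^1$ and $W=\C\setminus\{0\}$ the big cell minus its fixed point, $W^{\C^*}=\emptyset$, while Lemma \ref{lem:FRW} gives $mC^\T(W\subset\PP^1)_{|0}/eu=(1+y)/(\alpha-1)$, whose limit is $-(1+y)\neq 0$. You acknowledge in your closing paragraph that the retraction does not separate points and hope that ``the localization theorem forces'' the answer to be the class of the smaller space $Conf_k(F_1)$; no available theorem does this, and this is exactly where the proof breaks.

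You also have the sink convention reversed, and this is not cosmetic. In the paper the sink is the \emph{minimal} component, the one whose positive cell $B^+_{F_1}$ equals $F_1$ itself; the dense cell belongs to the source. The proposition is true precisely because of this: the paper's proof (corollary \ref{corst}) applies Theorem \ref{twABB} directly to the inclusion $Conf_k(M)\subset M^k$ with $F=F_1^k$, so that $f^{-1}(M_{F_1^k}^+)=Conf_k(M)\cap F_1^k=Conf_k(F_1)$ on the nose, and then notes $\lim_{{\bf t}\to 0}eu^{\T}(\nu_{F_1^k})=1$ to remove the Euler-class normalization (a point your write-up never addresses, since the statement of the proposition contains no Euler class). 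No decomposition of $Conf_k(M)$ along cells of $M^k$ is needed. Under your reading (sink $=$ dense cell) your support argument in the second step does go through, but the surviving term $Z_0=(X_1^+)^k\cap Conf_k(M)$ then has limit $mC^{\T_1}(Z_0\to F_1^k)$ via the retraction, which is not $mC^{\T_1}(Conf_k(F_1)\to F_1^k)$: for $M=\PP^1$, $k=2$ and $F_1$ the attracting point this yields $mC(Conf_2(\C)\to pt)=y^2+y$, whereas $Conf_2(pt)=\emptyset$ gives $0$. The two errors therefore do not cancel, and the argument as written does not prove the statement; the first two steps (additivity along the BB filtration and the vanishing of summands supported away from $F_1^k$) are sound but lead nowhere without the false third step.
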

	To prove this statement we generalize the Lefschetz-Riemann-Roch theorem and the results of \cite{WeBB} connecting Białynicki-Birula decomposition with localization theorems to the relative case.
	\paragraph{\bf Tools and notations}
	We consider only complex quasiprojective varieties.
	\begin{itemize}
		\item  $\alpha_1,...,\alpha_n$ denote the weights of the diagonal torus in GL$_n(\C)$.
		\item $K^\T(X)$ denotes the algebraic $\T$-equivariant $K$-theory of vector bundles.
		\item The localized $K$-theory $S^{-1}K^\T(X)$ denotes the equivariant $K$-theory of $X$ localized in the multiplicative system $S:=K^\T(pt)\setminus\{0\}$.
		\item $\lambda_y$ denotes multiplicative characteristic class in $K$-theory defined by:
		$$ 	\lambda_y(E):=\sum_{i=0}^{\rank E}[\Lambda^iE]y^i. $$
		\item $eu(E)$ denotes the Euler class of the vector bundle $E$ in $K$-theory equal to $\lambda_{-1}(E^*)$.
		\item $K^\T(Var/X)$ denotes the Grothendieck group of $\T$-equivariant morphisms from quasiprojective varieties to the given $\T$-variety $X$  (cf. \cite{Loo,Bi,AMSS}).
		\item When we think of a morphism as an element of the Grothendieck group we put it in square brackets.
		\item We use an abbreviation BB-decomposition for the Białynicki-Birula decomposition.
	\end{itemize}

	In the presence of a torus action one can apply localization to the fixed points and Lefschetz-Riemann-Roch theorem to calculate global invariants.
	
	\begin{atw}[Lefschetz-Riemann-Roch, \cite{ChGi} theorem 5.11.7] \label{tw:LRR}
		Assume that a torus $\T$ acts on smooth varieties $X$ and $Y$. Consider the multiplicative system $S$ of nonzero elements in $K^\T(pt)$. Let $F \subset Y^{\T}$ be a component of fixed points. For any proper $\T$-equivariant map $f:X \to Y$ and element $\alpha \in S^{-1}K^\T(X)$ the pushforward $f_*\alpha$ can be computed using an equality
		$$
		\frac{i_F^*f_*\alpha}{eu^{\T}(\nu_F)}=\sum_{G\subset X^{\T}\cap f^{-1}(F)}f^{|G}_*\frac{i_G^*\alpha}{eu^{\T}(\nu_G)}\,.
		$$
		Where the sum is indexed by the fixed points components of $X$ which lie in preimage of $F$.
	\end{atw}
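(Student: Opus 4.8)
The plan is to deduce this from the Thomason localization theorem in equivariant $K$-theory together with the self-intersection (excess intersection) formula for the regular embeddings of fixed-point components. Recall that for a smooth $\T$-variety $X$ with connected fixed components $\{G\}$, localization asserts that $\bigoplus_G(i_G)_*$ induces an isomorphism $\bigoplus_G S^{-1}K^\T(G)\xto{\sim}S^{-1}K^\T(X)$ whose inverse sends $\alpha$ to the collection $\big(i_G^*\alpha/eu^\T(\nu_G)\big)_G$; in particular each $eu^\T(\nu_G)=\lambda_{-1}(\nu_G^*)$ becomes invertible after inverting $S$, and one has the identity
$$\alpha=\sum_{G\subset X^\T}(i_G)_*\!\left(\frac{i_G^*\alpha}{eu^\T(\nu_G)}\right)\qquad\text{in }S^{-1}K^\T(X)\,.$$
Since $X$ is smooth this lives in $S^{-1}K^\T(X)=S^{-1}G^\T(X)$, so $f_*$ is defined on it.

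First I would apply $f_*$ to this identity and use functoriality of pushforward, $f_*(i_G)_*=(f\circ i_G)_*$. As $G$ is connected and $f$ is $\T$-equivariant, $f(G)\subset Y^\T$ is contained in a single fixed component $F_G$, so $f\circ i_G$ factors as $i_{F_G}\circ f^{|G}$ with $f^{|G}\colon G\to F_G$ proper (properness of $f$ passes to the closed subvariety $G$ and is cancelled by the separated closed immersion $i_{F_G}$). Restricting along $i_F^*$ and splitting the sum according to whether $F_G=F$, the terms with $F_G\neq F$ vanish: by the $K$-theoretic (Tor-independent) base change theorem applied to the Cartesian square with lower-left corner the empty scheme $F\cap F_G$, we get $i_F^*(i_{F_G})_*=0$. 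For the surviving terms, where $G\subset f^{-1}(F)$, one has $f\circ i_G=i_F\circ f^{|G}$, hence by the projection formula and the self-intersection identity $i_F^*(i_F)_*1=\lambda_{-1}(\nu_F^*)=eu^\T(\nu_F)$ on the smooth variety $Y$,
$$i_F^*(f\circ i_G)_*\beta=i_F^*(i_F)_*(f^{|G})_*\beta=eu^\T(\nu_F)\cdot(f^{|G})_*\beta\,.$$

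Collecting the surviving terms gives $i_F^*f_*\alpha=eu^\T(\nu_F)\cdot\sum_{G\subset X^\T\cap f^{-1}(F)}(f^{|G})_*\big(i_G^*\alpha/eu^\T(\nu_G)\big)$, and dividing by the invertible element $eu^\T(\nu_F)\in S^{-1}K^\T(F)$ yields the claim. The steps requiring the most care are the invertibility of the normal-bundle Euler classes after localization (the heart of Thomason's theorem, where smoothness enters), the vanishing $i_F^*(i_{F'})_*=0$ for disjoint closed $\T$-subvarieties, and keeping track of the factorizations $f\circ i_G=i_{F_G}\circ f^{|G}$ with $f^{|G}$ proper; since $X$ and $Y$ are assumed smooth these are all standard, and the argument is essentially that of \cite{ChGi}, Theorem 5.11.7.
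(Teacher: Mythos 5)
Your proof is correct and follows essentially the same route as the paper, which derives Lefschetz--Riemann--Roch from the localization formula in Remark 2 (writing $\alpha=\sum_G i^G_*\bigl(i_G^*\alpha/eu^\T(\nu_G)\bigr)$, pushing forward, and restricting to $F$). You simply make explicit the details the paper leaves implicit — the factorization $f\circ i_G=i_{F_G}\circ f^{|G}$, the vanishing of the cross terms with $F_G\neq F$, and the self-intersection formula producing the factor $eu^\T(\nu_F)$.
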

	
	\begin{rem} \label{rem:LRR}
		The Lefschetz-Riemann-Roch theorem is a consequence of the localization formula \cite{AtBo1,BeVe}. Namely
		$$
		i_F^*f_*\alpha=i_F^*f_*\sum_{G \subset X^{\T}}i^G_*\frac{i^*_G\alpha}{eu^\T(\nu_G)}
		=\sum_{G \subset X^{\T}}i_F^*i^{f(G)}_*f^{|G}_*\frac{i^*_G\alpha}{eu^\T(\nu_G)}
		=eu(\nu_F)\sum_{G\subset X^{\T}\cap f^{-1}(F)} \frac{i^*_G\alpha}{eu^\T(\nu_G)}\,,
		$$
		where morphisms $i$ are inclusions of fixed points set components. The first equality follows from the localization formula.
	\end{rem}

	\subsection{Acknowledgments}
	I would like to thank to Andrzej Weber for his support, guidance and helpful conversations. I thank the anonymous referee for pointing a way to generalize main theorem and for his comments which helped to improve overall quality of the paper.
	The author is supported by  the research project of the Polish National Research Center 2016/23/G/ST1/04282 (Beethoven 2, German-Polish joint project).
	
	\section{Computation of the motivic Chern classes}
	We consider a quasiprojective (maybe singular)
	variety $B$ with an action of an algebraic torus~$\T$
	embedded (equivariantly) as a locally closed subset into an ambient	smooth variety $M$.
	We consider its configuration space
	$$Conf_k(B)=\{(x_1,...,x_k)\in B^k| x_i\neq x_j\},$$
	with the natural diagonal action of the torus $\T$. The configuration space $Conf_k(B)$ is an invariant subset of smooth variety $M^k$. We consider also the space
	$$C_k(\C^n):= \{ (\vv_1,\dots , \vv_k)\in (\C^n)^k|\forall_i \vv_i\neq 0 \text{, } \lin (\vv_i) \neq \lin(\vv_j)\}$$
	of nonzero, pairwise linearly independent vectors (a bundle over the configuration space of projective space). This space admits the natural action of the group GL$_n(\C)\times (\C^*)^k$. Denote by $\T_\alpha$ the diagonal torus in the group GL$_n(\C)$, and by $\T_\beta$ the torus $(\C^*)^k$.
	Denote by $\alpha_1,...,\alpha_n$ and $\beta_1,...,\beta_k$ weights of the tori $\T_\alpha$ and $\T_\beta$.
	Our goal in this section is computation of the classes
	$
	mC^{\T}(Conf_k(B) \subset B^k)$
	and	
	 $mC^{\T_\alpha\times\T_\beta}(C_k(\C^n)\subset (\C^n)^k).$
	For simplicity we sometimes omit an ambient space in notation of the motivic Chern class.
	\begin{rem}
		\label{rem:conf}
		Spaces $C_k(\C^n)$ are orbit configuration spaces $F_{\C^*}(\C^n-\{0\},k)$ (defined in \cite{Xi} notation from 		\cite{FeZorb}) 
		 where the group $\C^*$ acts by homotheties. 
		It might look more natural to consider the bigger space~$F_{\C^*}(\C^n,k)$. Its class can be easily computed from the class of $F_{\C^*}(\C^n-\{0\},k)$ using the equality:
		$$F_{\C^*}(\C^n,k)=F_{\C^*}(\C^n-\{0\},k)\sqcup \bigsqcup_{i=1}^k F_{\C^*}(\C^n-\{0\},k-1)\,. $$
	\end{rem}
\subsection{Configuration space}

	Let $[k]$ denote the set $\{1,...,k\}$. Define a partition of a set $A$ as a set of nonempty, pairwise disjoint subsets, whose sum is the whole set $A$. Let $X_k$ denote the set of partitions of $[k]$. For a given partition $P\in X_k$ and $i \in [k]$ denote by $P(i)$ element of $P$ which contain $i$. 
	For a partition $P\in X_k$ consider the set 
	$$B_P=\{(x_1,...,x_k)\in B^k| x_i = x_j \text{ when $P(i)=P(j)$} \}\,. $$
	The configuration space $Conf_k(B)$ is given by $\binom{k}{2}$ conditions $x_i\neq x_j$. The inclusion-exclusion formula implies formula (for combinatorial details see appendix \ref{s:combi})
	\begin{align} \label{wyr:r1}
	[Conf_k(B)\to  M^k]=\sum_{P\in X_k}a(P)[B_P\to  M^k] \,,
	\end{align}
	where numbers $a(P) \in \Z$ depends only on partition $P$.
	Consider the set $G(P)$  of graphs  whose set of vertices is equal to $[k]$ and whose connected components induce partition $P$ on the set $[k]$ . For a given graph $G$ let $E(G)$ be its set of edges. Then (cf. lemma \ref{lem:aP})
	\begin{align}
	\label{wyr:r2}
	a(P)=\sum_{G\in G(P)} (-1)^{|E(G)|}=\prod_{P_i \in P} (-1)^{|P_i|-1}(|P_i|-1)!\,.
	\end{align}
	\begin{rem}
		Note that the equality (\ref{wyr:r2}) follows also from consideration of
		the M\"obius function of the partition lattice (see \cite{Stan}, example
		3.10.4).
	\end{rem}
	Thus to compute the classes $mC^\T(Conf_k(B)\to M^k)$ it is enough to compute the classes \hbox{$mC^\T(B_P\to M^k)$.} The set $B_P$ is  isomorphic to $B^{|P|}$ (such isomorphism is induced by a choice of ordering on the partition $P$). We have commutative diagram
	 $$\xymatrix{
	B^{|P|} \ar[r]^{i_P} \ar[d] & B^k \ar[d]\\
		M^{|P|} \ar[r]^{i_P}&M^k
 	}$$
	  where $i_P$ are "diagonal" maps corresponding to partition
$P$.
	  It follows from the functorial properties of the motivic Chern classes that
	 \begin{align}
	 \label{f1}
	 mC^{\T}(Conf_k(B)\subset M^k)=\sum_{P\in X_k} a(P)i_{P*}
	 \left(mC(B^{|P|}\subset M^{|P|})\right)
	 \,.
	 \end{align}
	 \begin{ex}
	 	Suppose that $B=M$. Then we have $mC^\T(B\to M)=\lambda_y(T^*M)$. Suppose moreover that $M=V$ is a complex vector space with a
	 	linear $\T$-action. Then under the Thom
	 	isomorphism  $K^\T(V) \simeq K^\T(\{0\})$ (cf. \cite{ChGi} theorem 5.4.17), given by inclusion $\{0\}\subset V$, we have $$mC^\T(id_V ) = \lambda_y(T^*V )=\lambda_y(V^*).$$
	 	Similarly
	 	$$mC(i_P:V^{|P|}\to V^k)=\lambda_y(V^*)^{|P|}\lambda_{-1}(V^*)^{k-|P|}$$
	 	since $i_P^*\circ i_{P*}$ is given by multiplication with the Euler class
	 	of the normal bundle of the diagonal embedding $i_P$
	 	(see \cite{ChGi} Prop. 5.4.10).
	Consider the affine space $\C^n$ with action of the diagonal torus. Denote by $\alpha_1,...,\alpha_n$ weights of the torus. Formula (\ref{f1}) implies that:
	\begin{multline*}
		mC^{\T}\left(Conf_k(\C^n)\right)=
		\sum_{P \in X_k}
		\prod_{P_a\in P}(-1)^{|P_a|-1} \left(|P_a|-1\right)!\left( 
		\prod_{j=1}^n\left(1+\frac{y}{\alpha_j}\right)\left(1-\frac{1}{\alpha_j}\right)^{|P_a|-1}\right)
	\end{multline*}
	 \end{ex}
 \begin{ex} \label{ex:pr}
 	Consider the projective space $\PP(\C^n)$ with action of the diagonal torus. Denote by $\alpha_1,...,\alpha_n$ weights of the torus. Denote by $\ee_1,...,\ee_n$ the fixed points of the action on $\PP(\C^n)$. Let $\ee=(\ee_{\iota_1},...,\ee_{\iota_{k}}) \in \PP(\C^n)^{k}$ be a fixed point. Such point induces partition $P_\ee$ of the set $[k]$ such that $x,y$ belong to the same element of the partition $P_\ee$ if and only if $\iota_x=\iota_y$.
 	Consider the set $X_{\ee}$ of partitions of the set $[k]$ which subdivide $P_\ee$. For an element of such partition $P_a \in P \in X_\ee$ let $\iota(P_a)$ denote the number $\iota_x$ for any $x\in P_a$.  Formula (\ref{f1}) implies that:
 	\begin{multline*}
 		mC^\T\left(Conf_k(\PP(\C^n))\right)_{|\ee}= \\
 		\sum_{P \in X_{\ee}}\prod_{P_a\in P} (-1)^{|P_a|-1}\left(|P_a|-1\right)!
 		\left(\prod_{j\neq \iota(P_a)} \left(1+\frac{y\alpha_{\iota(P_a)}}{\alpha_j}\right)\left(1-\frac{\alpha_{\iota(P_a)}}{\alpha_j}\right)^{|P_a|-1} \right)
 	\end{multline*}
 	
 \end{ex}
	
\subsection{Orbit Configuration space}
	The case of an orbit configuration space is very similar to the case of classical configuration space. For a partition $P\in X_k$ let
	$$B_P=\{(\vv_1,...,\vv_k)\in (\C^n-\{0\})^k|\lin( \vv_i) = \lin(\vv_j) \text{ when $P(i)=P(j)$}\}\,.
	$$
	The inclusion-exclusion formula once more implies the equality
	$$[C_k(\C^n)\to (\C^n)^k]=\sum_{P\in X_k}a(P)[B_P\to (\C^n)^k]\,,$$
	where numbers $a(P)$ are the same as in the previous subsection. Next step is computation of the classes $mC^{\T_\alpha\times\T_\beta}(B_P\to B^k)$. Motivic Chern classes are multiplicative with respect to the cartesian product of morphisms (\cite{AMSS} remark 4.3) in the sense that:
	$$mC^\T(f\times f': X\times X' \to Y\times Y')=mC^\T(f: X\times Y) \boxtimes
	mC^\T(f': X'\times Y')\,.$$
	So it is enough to compute the class $mC^{\T_\alpha\times\T_\beta}(B_P\to B^k)$ for the partition with only one element. 
	Namely
	$$B_k:=B_{\{[k]\}}=\{(\vv_1,...,\vv_k)\in (\C^n-\{0\})^k|\dim\lin(\vv_1,...,\vv_k) = 1\}\,.
	$$
	Unfortunately this variety doesn't have a smooth closure in the affine space $(\C^n)^k$ so we have to resolve its singularities. Consider the variety
	$$\tilde{B}_k \{(\vv_1,...,\vv_k,l)\in (\C^n-\{0\})^k\times \PP(\C^n)|\forall_i\vv_i\in l\} \subset (\C^n)^k\times \PP(\C^n), $$
	It has a smooth closure in $(\C^n)^k\times \PP(\C^n)$ (its boundary is SNC divisor) and the diagram
	\begin{align*}
	\xymatrix
	{
		\tilde{B}_k\ar[d]^{\simeq} \ar[r] & (\C^n)^k\times \PP(\C^n)^t \ar[d]^p\\
		B_k  \ar[r]& (\C^n)^k
	}
	\end{align*}
	commutes. Moreover the vertical arrow $\tilde{B}_k \to B_k$ is an isomorphism. It implies that
	$$mC^{\T_\alpha\times\T_\beta}(B_k \to (\C^n)^k)=p_*mC^{\T_\alpha\times\T_\beta}(\tilde{B}_k\to (\C^n)^k\times \PP(\C^n))\,.$$
	To compute the equivariant pushforward we use the Lefschetz-Riemann-Roch (\ref{tw:LRR})
	$$mC^{\T_\alpha\times\T_\beta}\left(B_k \to (\C^n)^k\right)=\sum_{i=1}^n \left(
	\frac{mC^{\T_\alpha\times\T_\beta}\left(\tilde{B}_k\to (\C^n)^k\times \PP(\C^n)\right)_{|{0\times\ee_i}}}{\lambda_{-1}(T^*\PP(\C^n))_{|\ee_i}}
	\right)$$
	For a fixed point $0\times\ee_i$ consider standard affine chart $U_i$ on $(\C^n)^k\times\PP(\C^n)$. The equivariant Verdier-Riemann-Roch theorem (\cite{AMSS} theorem 4.2)
	for an open embedding implies that to compute the motivic Chern class of $\tilde{B}_k$ at point $\ee_i$ one can consider only affine patch $U_i$
	$$mC^{\T_\alpha\times\T_\beta}\left(\tilde{B}_k\to (\C^n)^k\times \PP(\C^n)\right)_{|0\times\ee_i}= mC^{\T_\alpha\times\T_\beta}\left(\tilde{B}_k\cap  U_i\to U_i\right)_{|0}\,.$$
	Consider coordinates $\{x_{a,j}\}_{a\le k, j\le n}$ on $(\C^n)^k$ and projective coordinates $[y_j]_{j\le n}$ on $\PP(\C^n)$. On the set $U_i$ we have affine coordinates given by $\{x_{a,j}\}$ with torus weights $\beta_a+\alpha_j$ and $\frac{y_j}{y_i}$ with torus weight $\alpha_j-\alpha_i$.
	The variety $\tilde{B}_k \cap U_i$ is given by equations:
	$$
	\begin{cases}
	 x_{a,i} \neq 0  \text{ for all }a \in [k]\,, \\
	 x_{a,j}-x_{a,i}\frac{y_j}{y_i} = 0  \text{ for all }{a \in [k], j \in [n], j \neq i}\,.
	\end{cases}
	$$
	We choose new coordinates on the affine space $U_i$. Let variables  $\frac{y_j}{y_i}$ be the same as before and consider variables
	$$z_{a,j}=\begin{cases}
	x_{a,i} \text{ if } j=i\,,  \\
	x_{a,j}-x_{a,i}\frac{y_j}{y_i} \text{ if } j\neq i\,,
	\end{cases}$$
	with torus weight $\beta_a+\alpha_i$. Now the set $\tilde{B}_k$ is given by coordinate equations so we can use the fundamental calculation from \cite{FRW}.
	\begin{alemat}[\cite{FRW} subsection 2.7] \label{lem:FRW}
		Let a torus $\T=(\C^*)^r$ act on $\C$, and let the class of this equivariant line bundle over a point be $\alpha\in K^{\T}(pt)$. Using the additivity of the motivic Chern class we have
		\begin{equation}\label{eqn:key}
		mC^\T(\{0\}\subset \C)=1-1/\alpha,
		\qquad
		mC^\T(\C\subset \C)=1+y/\alpha,
		\end{equation}
		\[
		mC^\T(\C-\{0\}\subset \C)=(1+y/\alpha)-(1-1/\alpha)=(1+y)/\alpha.
		\]
	\end{alemat}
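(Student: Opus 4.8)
The plan is to treat the three formulas in turn, obtaining the first two from the normalization and functoriality axioms and the third as an immediate consequence of additivity; this is essentially the argument of \cite{FRW} subsection 2.7.

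First I would compute $mC^\T(\C\subset\C)=mC^\T(id_\C)$. Since $\C$ is smooth, the normalization axiom gives $mC^\T(id_\C)=\lambda_y(T^*\C)$. The cotangent bundle $T^*\C$ is the trivial line bundle whose fibre is the dual of the given one-dimensional $\T$-representation, i.e. the character $\alpha^{-1}$; hence $\lambda_y(T^*\C)=1+y\,\alpha^{-1}$ as a class in $K^\T(\C)$. Transporting this along the Thom isomorphism $K^\T(\C)\simeq K^\T(pt)$ induced by $\{0\}\subset\C$ (as in the Example preceding the lemma) identifies $\alpha^{-1}$ with $1/\alpha\in K^\T(pt)$, which yields $mC^\T(\C\subset\C)=1+y/\alpha$.

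Next I would compute $mC^\T(\{0\}\subset\C)$ via functoriality: the inclusion $i\colon\{0\}\to\C$ is proper, so $mC^\T(\{0\}\subset\C)=i_*\,mC^\T(id_{\{0\}})=i_*\lambda_y(T^*\{0\})=i_*[\mathcal{O}_{\{0\}}]$, since the cotangent space of a point is zero and $\lambda_y(0)=1=[\mathcal{O}_{pt}]$. To evaluate $i_*[\mathcal{O}_{\{0\}}]$ in $K^\T(\C)$ I would use the Koszul resolution $0\to \alpha^{-1}\cdot\mathcal{O}_\C \to \mathcal{O}_\C \to \mathcal{O}_{\{0\}}\to 0$, whose differential is multiplication by the degree-one coordinate function and therefore carries weight $\alpha^{-1}$; thus $i_*[\mathcal{O}_{\{0\}}]=(1-\alpha^{-1})[\mathcal{O}_\C]$, which under the same Thom isomorphism equals $1-1/\alpha$. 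No power of $y$ can appear here, the source being zero-dimensional.

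Finally, additivity applied to the closed subvariety $\{0\}\subset\C$ with open complement $\C-\{0\}$ gives $mC^\T(\C\subset\C)=mC^\T(\{0\}\subset\C)+mC^\T(\C-\{0\}\subset\C)$, whence $mC^\T(\C-\{0\}\subset\C)=(1+y/\alpha)-(1-1/\alpha)=(1+y)/\alpha$. The only point demanding care is the bookkeeping of the equivariant weights in the Thom isomorphism and in the Koszul differential --- obtaining $\alpha^{-1}$ rather than $\alpha$ in the right places --- but once the convention is fixed this is routine, and there is no genuine geometric obstacle.
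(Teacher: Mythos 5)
Your proposal is correct and follows essentially the same route as the paper's own remark: normalization plus the Thom isomorphism for the first two identities (your Koszul resolution is just the explicit computation showing $i_*[\mathcal{O}_{\{0\}}]$ corresponds to $\lambda_{-1}(\C^\vee)=1-1/\alpha$, i.e.\ the K-theoretic Euler class), and additivity for the third. The weight bookkeeping ($\alpha^{-1}$ on the dual/cotangent side) matches the paper's conventions.
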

	\begin{rem}
		Note that the last equality comes from the additivity of $mC$,
		and the first two equalities come from the Thom isomorphism 
		\hbox{$K^{\T}(\C) \simeq K^{\T}(\{0\}).$} We have
		$$mC^{\T}(\{0\}\subset\C)=eu^\T(\{0\} \subset \C) =\lambda_{-1}(\C^{\vee}) =1-1/\alpha$$
		and
		$mC^{\T}(id_C)=\lambda_y(C^\vee) = 1 + y/\alpha.$
	\end{rem}	
	To simplify notation consider function
	$$
	\psi_{i,j}(\theta)=\begin{cases}
	1-\frac{1}{\theta} \text{ for } i\neq j \,,\\
	\frac{1+y}{\theta} \text{ for } i=j \,.\\
	\end{cases}
	$$
	Application of the lemma \ref{lem:FRW} and multiplicative property of the motivic Chern class leads to formula
	$$mC^{\T_\alpha\times\T_\beta}\left(\tilde{B}_k \cap U_i \subset U_i \right)_{|\ee_i}=\lambda_y(T^*\PP(\C^n))_{|\ee_i}\prod_{j=1}^n\prod_{a=1}^k\psi_{i,j}(\beta_a\alpha_j)\,.
	$$

	Combining all the ingredients we obtain the final formula
	\begin{align*}
	&mC\left(C_k(\C^n)\right)=\sum_{P \in X}
	\prod_{P_k\in P} \left[ (-1)^{|P_k|-1} (|P_k|-1)!
	\sum_{i=1}^n \left(
	\prod_{j\neq i} \frac{1+y\frac{\alpha_i}{\alpha_j}}{1-\frac{\alpha_i}{\alpha_j}}
	\prod_{j=1}^n\prod_{a\in P_k}\psi_{i,j}(\beta_a\alpha_j) 
	\right)\right]
	\end{align*}
	\begin{rem}
		The class $mC(C_k(\C^n))$ can be computed in alternative way using the fact that space $C_k(\C^n)$ is a multiple affine cone (without the vertex) over the configuration space $Conf_k(\PP(\C^n))$ and following the procedure given in \cite{FRWp}, section 4.4.
	\end{rem}
	
\section{Generating series} \label{s:szer}
It is a common practice to present calculation of genera, or characteristic classes of family of varieties in the form of a generating series (e.g.
 \cite{Oh,CMSSY,Bal} for symmetric products,
 \cite{BoNi} for Hilbert schemes, \cite{CMOSSY} for
 unordered configuration spaces and 
 \cite{MS2}
 for products with
 their symmetric group actions). In this section 
we aim to prove the equalities in a
power series $R[[t]]$ for $R$ a suitable $\Q$-algebra, using the power series
$$
\exp(t)=\sum_{k=0}^{\infty}\frac{1}{k!}\cdot t^k \text{ and } \log(1+t)= \sum_{k=1}^{\infty}\frac{(-1)^{k-1}}{k}\cdot t^k.
$$

 In the following we consider $\T$-equivariant closed embedding
$B \subset M$ of complex quasiprojective varieties with $M$
smooth.
\begin{atw}
	\label{s3}
	\begin{align*}
	&1+\sum_{k=1}^{\infty}\frac{t^k}{k!}\cdot
	mC^{\T}\left(Conf_k(B) \subset M^k\right)_{|M}=
	\exp\left(mC^\T(B\subset M)
	\frac{\log\left(1+t\cdot eu^\T(TM)\right)}{eu^\T(TM)}
	\right)
	\end{align*}
	in the power series ring over $R = G^\T(M)[y]\otimes\Q,$ with the
	multiplication coming from the tensor product
	on $G^\T(M) = K^\T(M)$ and the restrictions $|M$ with respect to the diagonal embeddings
	$d_k: M \subset M^k.$ Here we use the formal power series
	$$\frac{log(1+at)}{a}=\sum_{k=1}^\infty \frac{(-1)^{k-1}a^{k-1}}{k!} \cdot t^k $$
\end{atw}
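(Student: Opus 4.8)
The plan is to restrict formula (\ref{f1}) along the diagonal $d_k:M\hookrightarrow M^k$, evaluate each term, and recognise the resulting sum over set partitions as the exponential generating function of a partition-indexed product, to which the classical exponential formula applies.

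\textbf{Restriction to the diagonal.} For $P\in X_k$ the full diagonal factors as $d_k=i_P\circ d_{|P|}$, where $i_P:M^{|P|}\to M^k$ is the partial diagonal appearing in (\ref{f1}) and $d_{|P|}:M\to M^{|P|}$ is the full diagonal; hence $d_k^*i_{P*}=d_{|P|}^*\,i_P^*\,i_{P*}$. The self-intersection formula — already used in the vector-space example via \cite{ChGi} Prop.~5.4.10 — gives $i_P^*i_{P*}(\alpha)=\alpha\cdot eu^\T(N_{i_P})$ for the normal bundle $N_{i_P}$ of the regular embedding $i_P$. Identifying $M^{|P|}$ with $\prod_{P_a\in P}M$, the bundle $N_{i_P}$ is a direct sum over the blocks $P_a$ of $(|P_a|-1)$ copies of $TM$ pulled back from the $a$-th factor, so $d_{|P|}^*N_{i_P}\cong (TM)^{\oplus(k-|P|)}$ on $M$ and $d_{|P|}^*eu^\T(N_{i_P})=eu^\T(TM)^{k-|P|}$ by multiplicativity of the Euler class. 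On the other hand, multiplicativity of $mC$ under cartesian products (\cite{AMSS}, Remark~4.3) gives $mC(B^{|P|}\subset M^{|P|})=mC(B\subset M)^{\boxtimes|P|}$, and pulling an external product back along the full diagonal produces the internal product, so $d_{|P|}^*mC(B^{|P|}\subset M^{|P|})=mC(B\subset M)^{|P|}$. Writing $c=mC^\T(B\subset M)$ and $e=eu^\T(TM)$ in $R$, formula (\ref{f1}) then reads
$$mC^\T(Conf_k(B)\subset M^k)_{|M}=\sum_{P\in X_k}a(P)\,c^{|P|}\,e^{k-|P|}.$$

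\textbf{The exponential formula.} Substituting $a(P)=\prod_{P_i\in P}(-1)^{|P_i|-1}(|P_i|-1)!$ from (\ref{wyr:r2}) and using $\sum_{P_i\in P}(|P_i|-1)=k-|P|$, the summand above equals $\prod_{P_i\in P}w_{|P_i|}$ with block weight $w_s=(-1)^{s-1}(s-1)!\,c\,e^{s-1}$. The exponential formula for exponential generating series, namely
$$1+\sum_{k\ge 1}\frac{t^k}{k!}\sum_{P\in X_k}\prod_{P_i\in P}w_{|P_i|}=\exp\!\Big(\sum_{s\ge 1}\frac{w_s}{s!}\,t^s\Big),$$
holds in $R[[t]]$ because the denominators $k!$ and $s!$ lie in $\Q\subset R$. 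Finally $\sum_{s\ge 1}\frac{w_s}{s!}t^s=c\sum_{s\ge 1}\frac{(-1)^{s-1}}{s}e^{s-1}t^s$, which is precisely $c\cdot\frac{\log(1+te)}{e}$ read as the honest power series $\sum_{s\ge 1}\frac{(-1)^{s-1}e^{s-1}}{s}t^s$ — so that no inverse of $e=eu^\T(TM)$ is ever needed — i.e.\ the right-hand side of the theorem.

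The only delicate points are bookkeeping: checking $d_k=i_P\circ d_{|P|}$ and the identification $d_{|P|}^*N_{i_P}\cong(TM)^{\oplus(k-|P|)}$ for an arbitrary partition $P$, and making precise the meaning of $\log(1+t\cdot eu^\T(TM))/eu^\T(TM)$ as a formal power series with coefficients polynomial in the (possibly non-invertible, possibly zero-divisor) class $eu^\T(TM)$. I do not expect a genuine obstacle here: the substantive computation is formula (\ref{f1}), and what remains is the assembly above together with the standard combinatorial exponential formula for Bell-type sums.
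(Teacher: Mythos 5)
Your proposal is correct and follows essentially the same route as the paper: restrict formula (\ref{f1}) to the diagonal, use the self-intersection formula $i_P^*i_{P*}(\alpha)=\alpha\cdot eu^\T(\nu(i_P))$ to get $\sum_{P\in X_k}a(P)\,mC^\T(B\subset M)^{|P|}\,eu^\T(TM)^{k-|P|}$, and then apply the exponential formula for sums over set partitions (which is exactly Lemma \ref{lem:szeregi} with $f(u)=t^u\,eu^\T(TM)^{u-1}\,mC^\T(B\subset M)$, your block weights $w_s$ being $(-1)^{s-1}(s-1)!f(s)$). Your remark that $\log(1+t\cdot eu^\T(TM))/eu^\T(TM)$ is to be read as a genuine power series requiring no inverse of the Euler class matches the convention stated in the theorem itself.
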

Assume now that $pt \in M$ is an isolated $\T$-fixed point in $M$.
\begin{atw}
	\label{s1}
	\begin{align*}
	&1+\sum_{k=1}^{\infty}\frac{t^k}{k!}\cdot
	\frac{mC^{\T}\left(Conf_k(B) \subset M^k\right)_{|pt^k}}{eu^{\T}\left(\{pt^k\} \subset M^k\right)}=
	\exp\left(\frac{mC^\T(B\subset M)_{|pt}}{eu^\T(\{pt\}\subset M)}\log(1+t)\right)
	\end{align*}
	in the power series ring over $R=\left(S^{-1}K^\T(pt^k)[y]\right)\otimes \Q$.
\end{atw}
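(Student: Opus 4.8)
The plan is to obtain Theorem~\ref{s1} from Theorem~\ref{s3} by pushing the identity further --- from the diagonal $M\subset M^k$ all the way down to the isolated fixed point --- and then rescaling the formal variable $t$.

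First I would note that restriction to the fixed point $pt^k\in M^k$ factors through the diagonal restriction: with $d_k:M\hookrightarrow M^k$ the diagonal embedding and $\iota:pt\hookrightarrow M$ the fixed point, the composite $d_k\circ\iota$ is the inclusion of $pt^k$, so $\iota^*\circ d_k^*=(d_k\iota)^*$ carries $mC^\T(Conf_k(B)\subset M^k)$ to $mC^\T(Conf_k(B)\subset M^k)_{|pt^k}$. Since $\iota^*$ is a ring homomorphism $K^\T(M)\to K^\T(pt)$, applying it to the identity of Theorem~\ref{s3} produces an identity in the power series ring over $K^\T(pt)[y]\otimes\Q$, and hence over $S^{-1}K^\T(pt)[y]\otimes\Q$. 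Explicitly it becomes $1+\sum_{k\ge1}\frac{t^k}{k!}mC^\T(Conf_k(B)\subset M^k)_{|pt^k}=\exp\!\big(mC^\T(B\subset M)_{|pt}\cdot\frac{\log(1+t e)}{e}\big)$, where $e:=\lambda_{-1}(T^*M)_{|pt}=eu^\T(\{pt\}\subset M)$, since $eu^\T(TM)=\lambda_{-1}(T^*M)$. Because $pt$ is isolated, all of its tangent weights are nonzero, so $e$ is a nonzero element of the integral domain $K^\T(pt)=R(\T)$ and therefore a unit in $S^{-1}K^\T(pt)$.

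Next I would apply the substitution $t\mapsto t/e$, which is a well-defined automorphism of the power series ring precisely because $e$ is invertible. On the left the $k$-th term becomes $\frac{t^k}{k!}e^{-k}\,mC^\T(Conf_k(B)\subset M^k)_{|pt^k}$, and since $eu^\T(\{pt^k\}\subset M^k)=\lambda_{-1}(T^*M^k)_{|pt^k}=e^k$, this is exactly the $k$-th term of Theorem~\ref{s1}; on the right, $\log(1+(t/e)\cdot e)/e=\log(1+t)/e$, so the exponential turns into $\exp\!\big(\frac{mC^\T(B\subset M)_{|pt}}{eu^\T(\{pt\}\subset M)}\log(1+t)\big)$, which is the asserted formula.

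I do not anticipate a serious obstacle; the only delicate points are checking that the restriction maps are compatible with $mC^\T$, with the $K$-theoretic Euler classes, and with the (tensor) ring structures, and observing that the variable substitution is legitimate only because $e$ is a unit --- which is exactly where the hypothesis that $pt$ is isolated is used. If one wants a proof that does not invoke Theorem~\ref{s3}, the same conclusion follows directly from formula~(\ref{f1}): restricting it to $pt^k$ and using multiplicativity of $mC^\T$ together with the self-intersection formula $i_P^*i_{P*}\xi=\xi\cdot eu^\T(\nu_{i_P})$ (the normal bundle of $i_P$ restricts at the fixed point to $k-|P|$ copies of $T_{pt}M$, so its Euler class there is $e^{k-|P|}$), the $P$-summand restricts to $a(P)\,m^{|P|}e^{k-|P|}$ with $m=mC^\T(B\subset M)_{|pt}$; dividing by $e^k$ gives $a(P)(m/e)^{|P|}$, and summing over $X_k$ with $a(P)=\prod_{P_j\in P}(-1)^{|P_j|-1}(|P_j|-1)!$ one recognizes the exponential formula with single-block weight $\sum_{s\ge1}\frac{t^s}{s!}(-1)^{s-1}(s-1)!=\sum_{s\ge1}\frac{(-1)^{s-1}}{s}t^s=\log(1+t)$, yielding $\exp\big((m/e)\log(1+t)\big)$. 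In that route the only routine work is the equivariant self-intersection computation and the partition-lattice bookkeeping already recorded after~(\ref{wyr:r2}).
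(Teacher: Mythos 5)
Your proposal is correct, and your primary route is organized differently from the paper's. The paper does not deduce Theorem~\ref{s1} from Theorem~\ref{s3}; instead it proves both as independent corollaries of the combinatorial Lemma~\ref{lem:szeregi}, feeding in the function $f(u)=t^u\cdot mC^\T(B\subset M)_{|pt}/eu^\T(\{pt\}\subset M)$ and reducing the left-hand side to that shape via exactly the self-intersection computation you describe in your fallback: restrict formula~(\ref{f1}) to $pt^k$, use $i_P^*i_{P*}\xi=\xi\cdot eu^\T(\nu(i_P))$ and multiplicativity to get $a(P)\,(m/e)^{|P|}$ per partition. So your second route is essentially the paper's proof, with the ``single-block weight equals $\log(1+t)$'' observation playing the role of the lemma. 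Your first route --- apply the ring homomorphism $\iota^*$ to the identity of Theorem~\ref{s3}, note that $\iota^*\circ d_k^*$ is restriction to $pt^k$, and then rescale $t\mapsto t/e$ using that $e=eu^\T(\{pt\}\subset M)$ is a unit in $S^{-1}K^\T(pt)$ because the isolated fixed point has no zero tangent weights --- is a clean repackaging that the paper never makes explicit; it buys you Theorem~\ref{s1} with no new partition bookkeeping, at the cost of first having the global statement over $M$. All the delicate points you flag (compatibility of restriction with $mC^\T$ and with Euler classes, $eu^\T(\{pt^k\}\subset M^k)=e^k$, legitimacy of the substitution) are handled correctly, and they match the identities the paper itself records in its proof.
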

\begin{rem}
	Note that the ring $S^{-1}K^\T(pt^k)[y]$ has no $\Z$-torsion so the homomorphism 
	$$S^{-1}K^\T(pt^k)[y] \to \left(S^{-1}K^\T(pt^k)[y]\right)\otimes \Q$$
	is injective.
\end{rem}
\begin{rem}
	Similar formulae also hold with the same proof for $\T$-equivariant MacPherson Chern classes $c_*^\T$
	and Hirzebruch classes $T^\T_{y*}$ in
	equivariant (even degree) (co)homology or Chow groups, with the equivariant
	Euler class given by the corresponding top-dimensional equivariant
	Chern class of the normal bundle.
\end{rem}
Finally we have the following (exponential) generating series for the "localized" \hbox{$\T$-equivariant} motivic Chern classes of the orbit configuration
space $C_k(\C^n)$ of pairwise linearly independent vectors in $\C^n$:
\begin{atw}
	\label{s2}
	\begin{multline*}
	1+\sum_{k=1}^{\infty}\frac{t^k}{k!}\cdot
	\frac{mC^{\T_\alpha}\big(C_k(\C^n) \subset (\C^n)^k\big)}{eu^{\T_\alpha}\big(\{0\} \subset (\C^n)^k\big)} =
	\prod_{i=1}^{n}\exp\left( 
	\frac{\lambda_y(T^*\PP(\C^n))_{|\ee_i}}{\lambda_{-1}(T^*\PP(\C^n))_{|\ee_i}}
	\log\left(1+\frac{t(1+y)}{\alpha_i-1}\right)\right)
	\end{multline*}
\end{atw}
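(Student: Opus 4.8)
The plan is to take the closed formula for $mC^{\T_\alpha\times\T_\beta}\big(C_k(\C^n)\subset(\C^n)^k\big)$ established at the end of Section~2, specialize $\T_\beta$ to be trivial (that is, substitute $\beta_a=1$ for all $a$), divide by the Euler class, and recognize the resulting sum over set partitions as an instance of the exponential formula. Throughout write $Q$ for a block of a partition $P\in X_k$ and $m=|Q|$ for its size.

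\smallskip
\emph{Step 1: specialize and divide by the Euler class.} After the substitution $\beta_a=1$ the function $\psi_{i,j}(\beta_a\alpha_j)=\psi_{i,j}(\alpha_j)$ no longer depends on $a$, so $\prod_{j=1}^n\prod_{a\in Q}\psi_{i,j}(\alpha_j)=\big(\tfrac{1+y}{\alpha_i}\prod_{j\neq i}(1-\tfrac1{\alpha_j})\big)^{m}$. Dividing the class by $eu^{\T_\alpha}\big(\{0\}\subset(\C^n)^k\big)=\prod_{j=1}^n(1-1/\alpha_j)^k$ and distributing the exponent $k=\sum_{Q\in P}|Q|$ among the blocks, one uses
$$\frac{\prod_{j\neq i}(1-1/\alpha_j)^m}{\prod_{j=1}^n(1-1/\alpha_j)^m}=\big(1-1/\alpha_i\big)^{-m},\qquad\alpha_i^m\big(1-1/\alpha_i\big)^m=(\alpha_i-1)^m.$$
Setting
$$c_i:=\frac{\lambda_y(T^*\PP(\C^n))_{|\ee_i}}{\lambda_{-1}(T^*\PP(\C^n))_{|\ee_i}}=\prod_{j\neq i}\frac{1+y\alpha_i/\alpha_j}{1-\alpha_i/\alpha_j},\qquad u_i:=\frac{1+y}{\alpha_i-1}$$
— both well defined in $S^{-1}K^{\T_\alpha}(pt)[y]$ since $\alpha_i-1$ and the $1-\alpha_i/\alpha_j$ are nonzero in $K^{\T_\alpha}(pt)$ — the contribution of a block of size $m$ collapses to $g_m:=(-1)^{m-1}(m-1)!\sum_{i=1}^n c_i\,u_i^{\,m}$, which depends only on $m$. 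Thus
$$\frac{mC^{\T_\alpha}\big(C_k(\C^n)\subset(\C^n)^k\big)}{eu^{\T_\alpha}\big(\{0\}\subset(\C^n)^k\big)}=\sum_{P\in X_k}\ \prod_{Q\in P}g_{|Q|}.$$

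\smallskip
\emph{Step 2: apply the exponential formula.} By the previous display the left-hand side of the theorem equals $\sum_{k\ge0}\frac{t^k}{k!}\sum_{P\in X_k}\prod_{Q\in P}g_{|Q|}$, and the exponential formula (\cite{Stan}; compare the partition combinatorics of Appendix~\ref{s:combi}) identifies this with $\exp\!\big(\sum_{m\ge1}\tfrac{g_m}{m!}t^m\big)$ in $R[[t]]$ for $R=\big(S^{-1}K^{\T_\alpha}(pt)[y]\big)\otimes\Q$. Expanding the exponent with $\sum_{m\ge1}\frac{(-1)^{m-1}}{m}x^m=\log(1+x)$ gives $\sum_{m\ge1}\frac{g_m}{m!}t^m=\sum_{i=1}^n c_i\log(1+u_it)$, and since the sum over $i$ is finite $\exp$ turns it into the product $\prod_{i=1}^n\exp\!\big(c_i\log(1+u_it)\big)$; substituting $u_it=t(1+y)/(\alpha_i-1)$ this is exactly the right-hand side of the theorem.

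\smallskip
The steps that require care are the bookkeeping of Step~1 — that the division by the Euler class really telescopes block by block into the clean factor $(\alpha_i-1)^{-m}$ — and the legitimacy of the series manipulations: here one notes that $R$ is $\Z$-torsion free (as in the remark after Theorem~\ref{s1}) and that each $\alpha_i-1$ is a unit in $S^{-1}K^{\T_\alpha}(pt)$, so $g_m\in R$ and the $\exp$ and $\log$ above are genuine elements of $R[[t]]$. Conceptually this is the same reduction to the exponential formula used to prove Theorems~\ref{s3} and~\ref{s1}, so no essentially new obstacle should arise; as an independent check one could instead use that $C_k(\C^n)$ is a multiple affine cone minus the vertex over $Conf_k(\PP(\C^n))$ (see the remark at the end of Section~2) and compare the outcome with Example~\ref{ex:pr}.
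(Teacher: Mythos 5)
Your proposal is correct and follows essentially the same route as the paper: it specializes the closed formula for $mC^{\T_\alpha\times\T_\beta}(C_k(\C^n))$ from Section~2 to trivial $\T_\beta$, divides by the Euler class so that each block of size $m$ contributes $(-1)^{m-1}(m-1)!\sum_i c_i u_i^m$, and then invokes the exponential formula, which is exactly the content of Lemma~\ref{lem:szeregi} and Corollary~\ref{cor:szer}. The only difference is presentational: you spell out the Step~1 bookkeeping (the telescoping of $\prod_j(1-1/\alpha_j)^m$ against the Euler class into $(\alpha_i-1)^{-m}$) that the paper leaves implicit in the choice of the function $f$.
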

Where the functions $exp(-)$ and $log(1+(-))$ are defined as appropriate power series.
	\begin{rem}
	The formula from remark \ref{rem:conf} implies that
	$$\frac{mC^{\T_\alpha}\big(F_{\C^*}(\C^n,k))}{eu^{\T_\alpha}\big(\{0\} \subset (\C^n)^k\big)}=
	\frac{mC^{\T_\alpha}\big(C_k(\C^n) \big)}{eu^{\T_\alpha}\big(\{0\} \subset (\C^n)^k\big)}+
	k\frac{mC^{\T_\alpha}\big(C_{k-1}(\C^n) )}{eu^{\T_\alpha}\big(\{0\} \subset (\C^n)^{k-1}\big)}\,. $$
	If we denote by $f$ the power series from the theorem \ref{s2} then
	$$
	1+\sum_{k=1}^{\infty}\frac{t^k}{k!}\cdot
	\frac{mC^{\T_\alpha}\big(F_{\C^*}(\C^n,k))}{eu^{\T_\alpha}\big(\{0\} \subset (\C^n)^k\big)}=f(t)+tf'(t)\,.
	$$
	\end{rem}
Define an ordered partition of the set $[k]$ as a sequence of nonempty pairwise disjoint subsets of~$[k]$ whose sum is the whole set $[k]$. Let $Y_k$ be the set of ordered partitions of~$[k]$. For a partition $P\in Y_k$ we use notation $P=[P_1,...,P_{|P|}]$ and $p_a=|P_a|$.
\begin{ex}
	$|Y_1|=1, \ |Y_2|=3, \ |Y_3|=13,$
	$ Y_2= \{(\{1,2\}),(\{1\},\{2\}),(\{2\},\{1\})\} \,.$
\end{ex}
\begin{alemat}
	\label{lem:szeregi}
	For an $\Q$-algebra $R$ and an arbitrary function $f:\N \to R[[t]]$ 
	$$
	1+\sum_{k=1}^{\infty} \sum_{P \in X_k}
	\frac{a(P)}{k!}\prod_{P_a\in P}f(|P_a|)= \exp\left(\sum_{u=1}^{\infty}\frac{(-1)^{u-1}f(u)}{u}\right)\,,
	$$
	when both series are well defined. 
	 If we assume that $t^n|f(n)$ for all natural numbers $n$, then both series are well defined.
\end{alemat}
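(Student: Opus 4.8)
The plan is to recognize the claimed identity as an instance of the classical exponential formula for set partitions, applied to the sequence of ``block weights'' $g_u := (-1)^{u-1}(u-1)!\,f(u)$. With this substitution the product $\prod_{P_a \in P} g_{|P_a|}$ equals $a(P)\prod_{P_a\in P}f(|P_a|)$ by formula (\ref{wyr:r2}), so the left-hand side is $\sum_{k\geq 0}\frac{1}{k!}\sum_{P\in X_k}\prod_{P_a\in P}g_{|P_a|}$ (the $k=0$ term supplying the leading $1$), while the right-hand side is $\exp\bigl(\sum_{u\geq 1}g_u/u!\bigr)$ because $(u-1)!/u! = 1/u$. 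Thus it suffices to prove, for any sequence $(g_u)_{u\geq 1}$ in a $\Q$-algebra for which the relevant series converge, the abstract identity $\exp\bigl(\sum_{u\geq 1}g_u/u!\bigr) = \sum_{k\geq 0}\frac{1}{k!}\sum_{P\in X_k}\prod_{P_a\in P}g_{|P_a|}$.

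To prove this abstract identity I would expand the exponential directly: writing $G = \sum_{u\geq 1}g_u/u!$, expand $\exp(G) = \sum_{m\geq 0}G^m/m! = \sum_{m\geq 0}\frac{1}{m!}\sum_{(u_1,\dots,u_m)}\prod_i (g_{u_i}/u_i!)$, the inner sum ranging over all tuples of positive integers, and then regroup the terms according to $k = u_1+\dots+u_m$. The combinatorial heart is the elementary count: for a fixed composition $(u_1,\dots,u_m)$ of $k$ there are exactly $k!/(u_1!\cdots u_m!)$ ordered set partitions $(S_1,\dots,S_m)$ of $[k]$ with $|S_i| = u_i$, and each unordered partition $P\in X_k$ with $m$ blocks arises from exactly $m!$ ordered ones. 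Combining these two counts converts $\sum_{m}\frac{1}{m!}\sum_{(u_i):\,\sum u_i = k}\prod_i (g_{u_i}/u_i!)$ into $\frac{1}{k!}\sum_{P\in X_k}\prod_{P_a\in P}g_{|P_a|}$, which is precisely the $k$-th grouped term. (Alternatively one could introduce an auxiliary grading variable $s$, verify that both sides satisfy the same first-order ODE in $s$ with the same value at $s=0$, and specialize $s=1$; but with no grading variable present the direct expansion seems cleaner.)

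It remains to address well-definedness and the legitimacy of the rearrangement, which is where I expect the only real (and fairly minor) care to be needed. Under the hypothesis $t^n \mid f(n)$ one has $t^u \mid g_u$, hence $g_u/u! \in t^uR[[t]]$, so $G \in tR[[t]]$ and $\exp(G)$ converges $t$-adically; moreover every term $\prod_i (g_{u_i}/u_i!)$ with $\sum u_i = k$, as well as every summand $\frac{a(P)}{k!}\prod_{P_a\in P}f(|P_a|)$ with $P\in X_k$, lies in $t^kR[[t]]$ (division by $k!$ being harmless since $R$ is a $\Q$-algebra). Hence all sums in sight have terms tending $t$-adically to $0$, only finitely many contribute modulo any fixed power of $t$, and the regrouping by total size $k$ is a legitimate rearrangement in the $t$-adic topology; if one merely assumes the two displayed series are well defined, the same grading argument applies formally, degree by degree. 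Substituting back $g_u = (-1)^{u-1}(u-1)!f(u)$, so that $g_u/u! = (-1)^{u-1}f(u)/u$, then yields the asserted formula.
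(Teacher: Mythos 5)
Your proposal is correct and is essentially the paper's own argument: the paper likewise passes from unordered to ordered partitions, uses the multinomial count $k!/(p_1!\cdots p_s!)$ of ordered partitions with prescribed block sizes together with the factor $1/s!$, and recognizes the resulting sum over compositions as the expansion of $\exp\left(\sum_u (-1)^{u-1}f(u)/u\right)$. You merely run the computation from the exponential side rather than the partition side (and isolate the substitution $g_u=(-1)^{u-1}(u-1)!\,f(u)$ and the $t$-adic convergence explicitly, which the paper leaves implicit), so the combinatorial content is identical.
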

\begin{proof}
	Observe that
	$$ 1+\sum_{k=1}^{\infty} \sum_{P \in X_k}
	\frac{a(P)}{k!}\prod_{P_a\in P}f(|P_a|)= 1+\sum_{k=1}^{\infty} \sum_{P \in Y_k}
	\frac{a(P)}{k!|P|!}\prod_{a=1}^{|P|}f(p_a).$$
	In the following computations we start by using the fact that terms of this series depend only on cardinality of elements of a partition and not on a~partition itself. Then we introduce sum over variable $s$ corresponding to the length of the partition.
	\begin{align*}
	&1+\sum_{k=1}^{\infty} \sum_{P \in Y_k}
	\frac{a(P)}{k!|P|!}\prod_{a=1}^{|P|}f(p_a)= \\
	&1+\sum_{k=1}^{\infty} \sum_{P \in Y_k}
	\frac{1}{|P|!\binom{k}{p_1,...,p_{|P|}}}\prod_{a=1}^{|P|}\frac{(-1)^{p_a-1}f(p_a)}{p_a}= \\
	&1+\sum_{k=1}^{\infty} \sum_{s=1}^{\infty} \sum_{p_1,...,p_s>0, \sum p_j=k}
	\frac{1}{s!}\prod_{a=1}^{s}\frac{(-1)^{p_a-1}f(p_a)}{p_a}= \\
	&1+ \sum_{s=1}^{\infty}\frac{1}{s!} \sum_{p_1,...,p_s>0}
	\prod_{a=1}^{s}\frac{(-1)^{p_a-1}f(p_a)}{p_a}= \\
	&1+ \sum_{s=1}^{\infty}\frac{1}{s!} \Big(\sum_{u=1}^{\infty}\frac{(-1)^{u-1}f(u)}{u}\Big)^s=\exp\Big(\sum_{u=1}^{\infty}\frac{(-1)^{u-1}f(u)}{u}\Big)
	\end{align*}
\end{proof}
\begin{rem}
	Lemma \ref{lem:szeregi} could  be rewritten in the language of 
	number partitions.
	Namely a partition $P\in X_k$ induces a number partition
	$\lambda=(n_1,...,n_k)\dashv k$
	of $k =
	1 \cdot n_1 + ... + k \cdot n_k$
	such that $n_i$ is the number of
	elements in $P$ of size $i$. Note that
	$l(\lambda) = n_1 + ... + n_k = |P|.$ The
	number of set-partitions $P\in X_k$ with $n_i$ blocks of size $i$ for $i = 1,...,k$
	is given by (see e.g. \cite{Stan}, Equation (3.36) on p.279):
	$$\frac{k!}{\prod_{i=1}^k ((i!)^{n_i} \cdot n_i!)} \,.
	$$
	Denote by $z_\lambda=\prod_{i=1}^k(i^{n_i}\cdot n_i!)$. The formula from lemma \ref{lem:szeregi} takes form
$$
1+\sum_{k=1}^{\infty} \sum_{\lambda\dashv k} (-1)^{k-l(\lambda)} \cdot
\frac{1}{z_\lambda}\cdot
\prod_{i=1}^kf(i)^{n_i}= \exp\left(\sum_{u=1}^{\infty}\frac{(-1)^{u-1}f(u)}{u}\right)\,,
$$
	fitting with \cite{Macd} equation (2.14') and \cite{Stan} theorem 1.3.3.
\end{rem}
\begin{cor}
For the function
$f : \N \to G^\T(M)[y] \otimes Q[[t]]$
given by
$$f(u) = t^u \cdot eu^\T(TM)^{u-1} \cdot mC^\T(B \subset M)$$
we obtain a proof of theorem \ref{s3}.
\end{cor}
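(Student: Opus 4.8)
The plan is to deduce Theorem \ref{s3} by substituting the stated function $f$ into Lemma \ref{lem:szeregi}. The only genuine content is to check that the diagonal restriction $mC^\T(Conf_k(B)\subset M^k)_{|M}$ equals $\sum_{P\in X_k}a(P)\prod_{P_a\in P}\bigl(eu^\T(TM)^{|P_a|-1}\,mC^\T(B\subset M)\bigr)$ in $K^\T(M)[y]=G^\T(M)[y]$; once this is established, Lemma \ref{lem:szeregi} turns the exponential generating series of these classes into the desired exponential.

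To compute the restriction I would start from formula (\ref{f1}), $mC^\T(Conf_k(B)\subset M^k)=\sum_{P\in X_k}a(P)\,i_{P*}\,mC^\T(B^{|P|}\subset M^{|P|})$, and simplify the summand using the multiplicativity of $mC^\T$ under cartesian products (\cite{AMSS}, already used above for the orbit configuration space): $mC^\T(B^{|P|}\subset M^{|P|})=mC^\T(B\subset M)^{\boxtimes|P|}$. Next I would restrict along the small diagonal $d_k:M\hookrightarrow M^k$. Since $d_k$ factors as $d_k=i_P\circ d_{|P|}$ through the diagonal $d_{|P|}:M\hookrightarrow M^{|P|}$ (both maps send $x$ to the constant tuple $(x,\dots,x)$), functoriality of pullback gives $d_k^*\,i_{P*}=d_{|P|}^*\circ i_P^*i_{P*}$, and the self-intersection formula (the identity ``$i_P^*\circ i_{P*}=$ multiplication by $eu^\T(\nu_{i_P})$'' used in the example above) rewrites this as $d_{|P|}^*\bigl(eu^\T(\nu_{i_P})\cdot(-)\bigr)$. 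The normal bundle of the block diagonal splits as $\nu_{i_P}\cong\boxplus_{P_a\in P}(TM)^{\oplus(|P_a|-1)}$, hence $eu^\T(\nu_{i_P})=\boxtimes_{P_a\in P}eu^\T(TM)^{|P_a|-1}$, and pulling this external product back along $d_{|P|}$ turns $\boxtimes$ into the tensor product on $K^\T(M)$, which is exactly the ring multiplication used in Theorem \ref{s3}. This yields $mC^\T(Conf_k(B)\subset M^k)_{|M}=\sum_{P\in X_k}a(P)\prod_{P_a\in P}\bigl(mC^\T(B\subset M)\,eu^\T(TM)^{|P_a|-1}\bigr)$.

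It then remains to apply Lemma \ref{lem:szeregi} over $R=G^\T(M)[y]\otimes\Q$ to $f(u)=t^u\cdot eu^\T(TM)^{u-1}\cdot mC^\T(B\subset M)$. Because the block sizes of a partition of $[k]$ sum to $k$, one has $\prod_{P_a\in P}f(|P_a|)=t^k\prod_{P_a\in P}\bigl(eu^\T(TM)^{|P_a|-1}mC^\T(B\subset M)\bigr)$, so the left-hand side of the lemma becomes $1+\sum_{k\ge1}\frac{t^k}{k!}\,mC^\T(Conf_k(B)\subset M^k)_{|M}$, while the right-hand side is $\exp\bigl(\sum_{u\ge1}\frac{(-1)^{u-1}}{u}t^u eu^\T(TM)^{u-1}mC^\T(B\subset M)\bigr)$, which equals $\exp\bigl(mC^\T(B\subset M)\cdot\frac{\log(1+t\cdot eu^\T(TM))}{eu^\T(TM)}\bigr)$ once the last factor is read as the formal power series $\sum_{u\ge1}\frac{(-1)^{u-1}}{u}eu^\T(TM)^{u-1}t^u$ introduced in the statement of Theorem \ref{s3}; this is the asserted identity. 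Since $t^u\mid f(u)$, Lemma \ref{lem:szeregi} also guarantees that both series are well defined. I expect the delicate point to be the diagonal-restriction step: justifying the factorization $d_k=i_P\circ d_{|P|}$, the block-by-block description of $\nu_{i_P}$, and especially the bookkeeping ensuring the $\boxtimes$ of Euler classes pulls back to the intended element of $R$; the rest is direct substitution into Lemma \ref{lem:szeregi} and recognizing the exponent as the stated power series.
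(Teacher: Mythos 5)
Your proposal is correct and follows essentially the same route as the paper: both reduce to computing $mC^\T(B^{|P|}\to M^k)_{|M}$ via the self-intersection formula $i_P^*i_{P*}=eu^\T(\nu(i_P))\cdot(-)$ and the identification $eu^\T(\nu(i_P))_{|M}=eu^\T(TM)^{k-|P|}$, then substitute into Lemma \ref{lem:szeregi}. Your extra care with the factorization $d_k=i_P\circ d_{|P|}$ and the block-by-block splitting of $\nu(i_P)$ just makes explicit what the paper's one-line computation leaves implicit (and your exponent $\sum_{u\ge 1}\tfrac{(-1)^{u-1}}{u}a^{u-1}t^u$ is the correct reading of the paper's formal series $\log(1+at)/a$).
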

\begin{proof}
	We use the fact that 
	\begin{align*}&mC^\T(B^{|P|}\to M^k)_{|M}=
	\left(i_{P*}mC^\T(B^{|P|}\to M^{|P|})\right)_{|M}=
	\left(i_P^*i_{P*}mC^\T(B^{|P|}\to M^{|P|})\right)_{|M}= \\
	&=mC^\T(B^{|P|}\to M^{|P|})_{|M} \cdot eu^\T(\nu(i_P))_{|M}=mC^\T(B\subset M)^{|P|}\cdot eu^\T(TM)^{k-|P|}
	\end{align*}
	where $\nu(i_P)$ denotes normal bundle to the embedding $i_P$ and ${|M}$ restriction to diagonal.
\end{proof}

\begin{cor}
	For the function $f:\N \to \left(S^{-1}K^\T(pt)[y]\right)\otimes\Q[[t]]$ given by
	$$f(u)=t^u\cdot
	\frac{mC^\T(B\subset M)_{|pt}}{eu^\T(\{pt\}\subset M)}$$
	we obtain the proof of theorem \ref{s1}.
\end{cor}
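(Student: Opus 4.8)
The plan is to follow the proof of the preceding corollary verbatim, except that the restriction to the diagonal $M\subset M^k$ is replaced by restriction to the isolated fixed point $pt^k\in M^k$, followed by division by its equivariant Euler class; everything takes place in the $\Q$-algebra $R=\left(S^{-1}K^\T(pt^k)[y]\right)\otimes\Q$, using $K^\T(pt^k)\cong K^\T(pt)$. The starting point is formula (\ref{f1}), $mC^\T(Conf_k(B)\subset M^k)=\sum_{P\in X_k}a(P)\,i_{P*}\bigl(mC^\T(B^{|P|}\subset M^{|P|})\bigr)$, where $i_P\colon M^{|P|}\hookrightarrow M^k$ is the regular ``diagonal'' embedding attached to $P$. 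First I would observe that the fixed-point inclusion factors as $\{pt^k\}\xrightarrow{\sim}\{pt^{|P|}\}\hookrightarrow M^{|P|}\xrightarrow{\,i_P\,}M^k$, so that by functoriality of pullback together with the self-intersection formula $i_P^*i_{P*}(-)=(-)\cdot eu^\T(\nu(i_P))$ (already used in the previous corollary, cf. \cite{ChGi} Prop. 5.4.10) one obtains
\begin{align*}
\bigl(i_{P*}mC^\T(B^{|P|}\subset M^{|P|})\bigr)_{|pt^k}=mC^\T(B^{|P|}\subset M^{|P|})_{|pt^{|P|}}\cdot eu^\T(\nu(i_P))_{|pt^{|P|}}.
\end{align*}

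The next step is to evaluate the two factors. By multiplicativity of $mC$ under cartesian products (\cite{AMSS} remark 4.3), $mC^\T(B^{|P|}\subset M^{|P|})_{|pt^{|P|}}=\bigl(mC^\T(B\subset M)_{|pt}\bigr)^{|P|}$. For the Euler class, $\nu(i_P)_{|pt^{|P|}}$ is, as a $\T$-representation, $(k-|P|)$ copies of $T_{pt}M$ (each block $P_a$ of $P$ contributing $|P_a|-1$ copies), hence $eu^\T(\nu(i_P))_{|pt^{|P|}}=eu^\T(\{pt\}\subset M)^{\,k-|P|}$. Since $pt$ is isolated, all weights of $T_{pt}M$ are nonzero, so $eu^\T(\{pt\}\subset M)=\lambda_{-1}(T^*M)_{|pt}$ is a nonzero element of $K^\T(pt)$, hence invertible in $S^{-1}K^\T(pt)$; this is exactly why one passes to the localized theory. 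Writing $c:=mC^\T(B\subset M)_{|pt}\big/eu^\T(\{pt\}\subset M)$ and dividing by $eu^\T(\{pt^k\}\subset M^k)=eu^\T(\{pt\}\subset M)^{k}$, the exponents $k-|P|$ and $k$ partially cancel and one is left with
\begin{align*}
\frac{\bigl(i_{P*}mC^\T(B^{|P|}\subset M^{|P|})\bigr)_{|pt^k}}{eu^\T(\{pt^k\}\subset M^k)}=c^{|P|}.
\end{align*}

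Substituting into (\ref{f1}) and weighting by $t^k/k!$, the left-hand side of Theorem \ref{s1} becomes $1+\sum_{k\ge1}\frac{1}{k!}\sum_{P\in X_k}a(P)\,c^{|P|}\,t^k$. For the function $f(u)=t^u\cdot c$ of the corollary one has $\prod_{P_a\in P}f(|P_a|)=t^{\sum_a|P_a|}c^{|P|}=t^{k}c^{|P|}$, so this is precisely $1+\sum_{k\ge1}\frac{1}{k!}\sum_{P\in X_k}a(P)\prod_{P_a\in P}f(|P_a|)$. Since $t^u\mid f(u)$ for every $u$, Lemma \ref{lem:szeregi} applies and identifies the series with $\exp\bigl(\sum_{u\ge1}\tfrac{(-1)^{u-1}f(u)}{u}\bigr)=\exp\bigl(c\sum_{u\ge1}\tfrac{(-1)^{u-1}t^u}{u}\bigr)=\exp\bigl(c\log(1+t)\bigr)$, which is exactly the right-hand side of Theorem \ref{s1}. (One may then drop $\otimes\Q$ using the injectivity noted after Theorem \ref{s1}.)

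I expect the only step requiring genuine care — the analogue of the computation carried out in the previous corollary — to be the self-intersection/localization identity for $i_{P*}$ restricted to the fixed point: one must verify that it contributes exactly $k-|P|$ factors of $eu^\T(\{pt\}\subset M)$, so that after dividing by $eu^\T(\{pt^k\}\subset M^k)$ everything collapses into the single $|P|$-th power $c^{|P|}$, matching the product $\prod_{P_a\in P}f(|P_a|)$ demanded by Lemma \ref{lem:szeregi}. Once this is established the conclusion is a purely formal application of that lemma.
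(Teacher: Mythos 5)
Your proposal is correct and follows essentially the same route as the paper: the paper's proof consists precisely of the identity $eu^\T(\{pt^k\}\subset M^k)=eu^\T(\{pt\}\subset M)^k$ together with the chain $\bigl(i_{P*}mC^\T(B^{|P|}\to M^{|P|})\bigr)_{|pt^k}=mC^\T(B^{|P|}\to M^{|P|})_{|pt^{|P|}}\cdot eu^\T(TM)^{k-|P|}_{|pt}$, yielding the same collapse to the $|P|$-th power of $mC^\T(B\subset M)_{|pt}\big/eu^\T(\{pt\}\subset M)$ before invoking Lemma \ref{lem:szeregi}. Your added remarks (invertibility of the Euler class at an isolated fixed point, the explicit check that $\prod_a f(|P_a|)=t^kc^{|P|}$) are correct elaborations of steps the paper leaves implicit.
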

\begin{proof}
	Note that
	$$eu^\T(\{pt^u\}\subset M^u)=eu^\T(\{pt\}\subset M)^u=eu^\T(TM)_{|pt}^u.$$
	Moreover
	\begin{align*}
	&\frac{mC^\T(B^{|P|}\to M^k)_{|pt^k}}{eu^\T(\{pt^k\}\subset M^k)}=
	\frac{\left(i_P^*i_{P*}mC^\T(B^{|P|}\to M^{|P|})\right)_{|pt^{|P|}}}{eu^\T(\{pt\}\subset M)^k}=\\
	&=
	\frac{mC^\T(B^{|P|}\to M^{|P|})_{|pt^{|P|}}\cdot eu^\T(TM)^{k-|P|}_{|pt}}{eu^\T(\{pt\}\subset M)^k}=
	\frac{mC^\T(B\to M)^{|P|}_{|pt}}{eu^\T(\{pt\}\subset M)^{|P|}}
	\end{align*}
\end{proof}
\begin{cor}
	\label{cor:szer}
	For the function $f:\N \to S^{-1}K^\T(pt)[y][[t]]$ given by
	$$f(u)=\sum_{i=1}^n \bigg[
	\bigg(\frac{(1+y)t}{(\alpha_i-1)}\bigg)^u
	\prod_{j\neq i}
	\frac{1+y\frac{\alpha_i}{\alpha_j}}{1-\frac{\alpha_i}{\alpha_j}}
	\bigg]\,,$$
	we obtain the proof of theorem \ref{s2}
\end{cor}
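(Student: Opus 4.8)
The plan is to feed the function $f$ from the statement into Lemma \ref{lem:szeregi} and to check that each side of that lemma reproduces the corresponding side of Theorem \ref{s2}. Since $f(u)$ is manifestly divisible by $t^u$, the hypothesis of Lemma \ref{lem:szeregi} holds, so both series are well defined over $R=S^{-1}K^{\T_\alpha}(pt)[y]$; this $R$ is already a $\Q$-algebra, because $S^{-1}K^{\T_\alpha}(pt)$ is the field of fractions of the Laurent polynomial ring $K^{\T_\alpha}(pt)$ and hence contains $\Q$, and in particular all the denominators $\alpha_i-1$ and $1-\alpha_i/\alpha_j$ that occur below are invertible.

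First I would transform the left hand side. Restricting the group from $\T_\alpha\times\T_\beta$ to the subtorus $\T_\alpha$ amounts to setting every $\beta_a=1$ in the formula for $mC(C_k(\C^n))$ proved in the previous section, so that $\psi_{i,j}(\beta_a\alpha_j)$ becomes $\psi_{i,i}(\alpha_i)=(1+y)/\alpha_i$ when $j=i$ and $\psi_{i,j}(\alpha_j)=1-1/\alpha_j$ when $j\neq i$. One then divides by
$$eu^{\T_\alpha}\big(\{0\}\subset(\C^n)^k\big)=\lambda_{-1}\big(T^*(\C^n)^k\big)_{|0}=\prod_{j=1}^{n}\big(1-1/\alpha_j\big)^{k}$$
and uses the elementary identity $\frac{(1+y)/\alpha_i}{1-1/\alpha_i}=\frac{1+y}{\alpha_i-1}$, which collapses the product over $j$ inside the block $P_a$ of a partition to $\big(\frac{1+y}{\alpha_i-1}\big)^{|P_a|}$. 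Distributing $t^k=\prod_{P_a\in P}t^{|P_a|}$ and $\prod_{j=1}^{n}(1-1/\alpha_j)^k=\prod_{P_a\in P}\prod_{j=1}^{n}(1-1/\alpha_j)^{|P_a|}$ over the blocks of a partition $P\in X_k$, and inserting $a(P)=\prod_{P_a\in P}(-1)^{|P_a|-1}(|P_a|-1)!$ from (\ref{wyr:r2}), I expect to arrive at
$$\frac{t^k}{k!}\cdot\frac{mC^{\T_\alpha}\big(C_k(\C^n)\big)}{eu^{\T_\alpha}\big(\{0\}\subset(\C^n)^k\big)}=\sum_{P\in X_k}\frac{a(P)}{k!}\prod_{P_a\in P}f(|P_a|)\,,$$
so that the left hand side of Lemma \ref{lem:szeregi} for this $f$ is exactly the left hand side of Theorem \ref{s2}.

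Next I would evaluate the right hand side. Pulling the sum over $i$ out of $\sum_{u\ge1}\frac{(-1)^{u-1}f(u)}{u}$ and applying the defining power series $\sum_{u\ge1}\frac{(-1)^{u-1}}{u}z^u=\log(1+z)$ with $z=\frac{t(1+y)}{\alpha_i-1}$ gives
$$\sum_{u=1}^{\infty}\frac{(-1)^{u-1}f(u)}{u}=\sum_{i=1}^{n}\left(\prod_{j\neq i}\frac{1+y\alpha_i/\alpha_j}{1-\alpha_i/\alpha_j}\right)\log\left(1+\frac{t(1+y)}{\alpha_i-1}\right)\,,$$
and $\exp$ turns this sum over $i$ into a product over $i$. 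It then remains to recognise the scalar $\prod_{j\neq i}\frac{1+y\alpha_i/\alpha_j}{1-\alpha_i/\alpha_j}$ as $\frac{\lambda_y(T^*\PP(\C^n))_{|\ee_i}}{\lambda_{-1}(T^*\PP(\C^n))_{|\ee_i}}$: writing $L_j$ for the line $\ee_j$, one has $T_{\ee_i}\PP(\C^n)=\bigoplus_{j\neq i}\operatorname{Hom}(L_i,L_j)$ with $\T_\alpha$-weights $\alpha_j/\alpha_i$, so $T^*_{\ee_i}\PP(\C^n)$ has weights $\alpha_i/\alpha_j$ for $j\neq i$, and $\lambda_y$ (respectively $\lambda_{-1}$) evaluated there is $\prod_{j\neq i}(1+y\alpha_i/\alpha_j)$ (respectively $\prod_{j\neq i}(1-\alpha_i/\alpha_j)$). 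This matches the right hand side of Theorem \ref{s2}, which finishes the argument.

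The hard part will be the bookkeeping in the first step: one must verify that after specializing $\T_\beta$ to the trivial torus, dividing by the Euler class, and regrouping the resulting factors along the blocks of each set partition, one recovers $\prod_{P_a\in P}(-1)^{|P_a|-1}(|P_a|-1)!\,f(|P_a|)$ exactly, with the powers of $t$ and of $1-1/\alpha_j$ attached to the correct block. Beyond Lemma \ref{lem:szeregi} and the explicit class computed in Section 2 no new input is required.
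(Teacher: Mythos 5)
Your proposal is correct and follows essentially the same route as the paper: substitute this $f$ into Lemma \ref{lem:szeregi}, pull the sum over $i$ through $\sum_u\frac{(-1)^{u-1}}{u}$ to recognise the logarithm, turn $\exp$ of the sum into a product of exponentials, and identify $\prod_{j\neq i}\frac{1+y\alpha_i/\alpha_j}{1-\alpha_i/\alpha_j}$ with $\lambda_y(T^*\PP(\C^n))_{|\ee_i}/\lambda_{-1}(T^*\PP(\C^n))_{|\ee_i}$. The only difference is that you spell out the matching of the left-hand side (the restriction $\beta_a\mapsto 1$, the division by $\prod_j(1-1/\alpha_j)^k$, and the block-by-block collapse to $\big(\tfrac{t(1+y)}{\alpha_i-1}\big)^{|P_a|}$), which the paper leaves implicit in its first displayed equality; your bookkeeping there checks out.
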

\begin{proof}
	\begin{multline*}
	1+\sum_{k=1}^{\infty}\frac{1}{k!}\cdot	\frac{mC^{\T_\alpha}\big(C_k(\C^n) \subset (\C^n)^k\big)}{eu^{\T_\alpha}\big(\{0\} \subset (\C^n)^k\big)}\cdot t^k= \\
	=\exp\left(\sum_{u=1}^{\infty}t^u(1+y)^u\frac{(-1)^{u-1}}{u}\cdot
	\left(\sum_{i=1}^n 
	\frac{1}{(\alpha_i-1)^u}
	\prod_{j\neq i}
	\frac{1+y\frac{\alpha_i}{\alpha_j}}{1-\frac{\alpha_i}{\alpha_j}}
	\right)\right) = \\
	=\exp\left(\sum_{i=1}^n\left(\prod_{j\neq i}
	\frac{1+y\frac{\alpha_i}{\alpha_j}}{1-\frac{\alpha_i}{\alpha_j}}\right)
	\left(\sum_{u=1}^{\infty}\frac{(-1)^{u-1}}{u}\left(\frac{t(1+y)}{\alpha_i-1}\right)^u
	\right)\right)= \\
	=\prod_{i=1}^{n}\exp\left(\log\left(1+\frac{t(1+y)}{\alpha_i-1}\right)
	\prod_{j\neq i}
	\frac{1+y\frac{\alpha_i}{\alpha_j}}{1-\frac{\alpha_i}{\alpha_j}}\right)
	\end{multline*}
\end{proof}

The expression in theorem \ref{s2} can be written in a different form using residues methods  (See \cite{Zie}
 for cohomology and \cite{WeZie}
  for $K$-theory).
Consider the function
$$F(z)=\frac{1}{z(1+y)}
\log\left(1+\frac{t(1+y)}{z-1}\right)
\prod_{i=1}^n
\frac{1+y\frac{z}{\alpha_i}}{1-\frac{z}{\alpha_i}} \,.
$$
\begin{rem}
	The logarithm inside of the function $F$ is a formal power series, not a branch of the complex logarithm. We take residues in each gradation of the formal variable $t$	separately.		
\end{rem}
The function $F$ has residues in 
$z\in\{\alpha_1,...,\alpha_n,1,0,\infty\}$. Residues in $0$ and $\infty$ are easily computable:
$$Res_{z=0}F(z)=\frac{\log(1-t(1+y))}{(1+y)}\text{  ,  } Res_{z=\infty}F(z)=
0\,.$$
So the residue theorem implies that
\begin{multline*}
1+\sum_{k=1}^{\infty}\frac{1}{k!}\cdot
\frac{mC^{\T_\alpha}\big(C_k(\C^n) \subset (\C^n)^k\big)}{eu^{\T_\alpha}\big(\{0\} \subset (\C^n)^k\big)}\cdot t^k= \\
=
\exp\left(-\sum_{i=1}^{n}Res_{z=\alpha_i}F(z)
\right)=
\exp\left(Res_{z=0}F(z)+Res_{z=1}F(z)+Res_{z=\infty}F(z)
\right)= \\
=\exp\left(
\frac{\log(1-t(1+y))}{(1+y)}+Res_{z=1}\left(
\frac{\lambda_{yz}(T^*\C^n)}{z(1+y)\lambda_{-z}(T^*\C^n)}
\cdot\log\left(1+\frac{t(1+y)}{z-1}\right)\right)\right)= \\
=\left(1-t(1+y)\right)^{\frac{1}{1+y}}Res_{z=1}\left(
\frac{\lambda_{yz}(T^*\C^n)}{z(1+y)\lambda_{-z}(T^*\C^n)}
\cdot\log\left(1+\frac{t(1+y)}{z-1}\right)\right)\,.
\end{multline*}
	
	\section{Stability} \label{s:BB}
	\subsection{Change of number of points}
	For any variety $B$ there is a decomposition in the group $K^\T(Var/B^k)$
	$$[Conf_{k+1}(B) \to B^{k+1}]=[Conf_{k}(B)\times B \to B^{k+1}] - \sum_{i=1}^k [Conf_{k}(B)\to B^k\xto{j_i} B^{k+1}]\,,$$
	where the map $j_i$ is given by
	$j_i(x_1,...,x_k)=(x_1,...,x_k,x_i).$
	The maps $j_i$ are proper, so this decomposition induces an equality
	$$mC^\T\big(Conf_{k+1}(B)\big)=
	mC^\T\big(Conf_k(B)\big)\boxtimes 	mC^\T\big(B\big)-\sum\limits_{i=1}^k j_{i*}mC^\T\big(Conf_k(B)\big)\,.$$
	\begin{ex}
		Consider the projective space $\PP(\C^n)$ with the natural action of the diagonal torus $\T_\alpha$. Denote the fixed points by $\ee_1,...,\ee_n$ of the action on $\PP(\C^n)$. Choose a fixed point \hbox{$\ee=(\ee_{\iota_1},...,\ee_{\iota_{k+1}}) \in \PP(\C^n)^{k+1}$} and let
		$$I=\{j\le k| \iota_j=\iota_{k+1}\}\,.$$
		Let  $\tilde{\ee}=(\ee_{\iota_1},...,\ee_{\iota_{k}})$. Then
		\begin{align*}
		&mC^\T\big(Conf_{k+1}(\PP(\C^n))\big)_{|\ee}=\\
		&=mC^\T\big(Conf_k(\PP(\C^n))\big)_{|\tilde{\ee}} 	\lambda_y\big(\PP(\C^n)\big)_{|\ee_{\iota_{k+1}}}-\sum_{i\in I} mC^\T\big(Conf_k(\PP(\C^n))\big)_{|\tilde{\ee}}eu(\nu(j_i))=\\
		&=mC^\T\big(Conf_k(\PP(\C^n))\big)_{|\tilde{\ee}}
		\left(
		\lambda_y\big(\PP(\C^n)\big)_{|\ee_{\iota_{k+1}}}-
		|I|\left(eu \left(\nu(\ee_{\iota_{k+1}} \subset \PP(\C^n))\right)\right)\right)= \\
		&=mC^\T\big(Conf_k(\PP(\C^n))\big)_{|\tilde{\ee}}
		\left(
		\prod_{i\neq \iota_{k+1}}\left(1+y\frac{\alpha_{\iota_{k+1}}}{\alpha_{i}}\right)-
		|I|\prod_{i\neq \iota_{k+1}}\left(1-\frac{\alpha_{\iota_{k+1}}}{\alpha_{i}}\right)
		\right)\,.
		\end{align*}
	\end{ex}
	\subsection{Connections with BB decomposition}
	When one dimensional torus $\C^*$ acts on a smooth, projective variety $M$, there is the BB-decomposition \cite{B-B1,B-B2,B-B3} (see also \cite{Bro})
	$$M= \bigsqcup_{F\in M^{\C^*}} B_F^+\,,$$
	where
	$$B_F^+=\{x\in M|\lim_{t \to 0}tx \in F\}\,.$$
	Such decomposition induces a partial order on the fixed points components such that $ F_1 \ge F_2$ if and only if the closure of the BB cell $B^+_{F_1}$ intersects $F_2$. We call the minimal and maximal elements of such decomposition sink and source respectively.
\\
		Assume that a torus $\T$ with a chosen one dimensional subtorus $\C^* \subset \T$ acts on a smooth, projective variety $M$. Consider the BB decomposition
		for the action of the one dimensional subtorus $\C^*$. Let $F_1 \subset M^{\C^*}$ be the sink (or the source) of this decomposition. It turns out that the motivic Chern class of the configuration space of $M$ determines the class of the configuration space of $F_1$.
		To formulate this fact formally we define a limit map.
		\begin{adf} \label{df:lim}
			 Let $F$ be a $\T$ variety with the trivial action of a chosen subtorus $\C^* \subset \T$. Consider the quotient torus $\T_1=\T/\C^*$. Denote by ${\bf t}$ the character of torus $\C^*$. Choose a split $\T_1 \times \C^* \simeq \T$. Consider a subring 
			$$
			K^{\T_1}(F)[t] \subset K^{\T_1}(F)[t,t^{-1}] \simeq K^{\T_1\times \C^*}(F)\simeq K^\T(X) \,,
			$$
			 where the first isomorphism follow from the paragraph 5.2.1 \cite{ChGi}, and the second is given by the split. Define the map $\lim_{{\bf t} \to 0}$
			 \begin{align*}
			 	K^{\T_1}(F)[{\bf t}] \to K^{\T_1}(F)
			 \end{align*}	
			by killing all positive powers of ${\bf t}$.
			 The subring $K^{\T_1}(F)[{\bf t}] \subset K^\T(F)$ and the map $\lim_{{\bf t} \to 0}$ are independent from the choice of splitting. \\
			Consider a subring $A$ of the localized $K$-theory defined by
			$$
			A=\{f\in S^{-1}K^{\T}(F)|f=\frac{a}{b}, a\in K^{\T_1}(F)[{\bf t}], b\in K^{\T_1}(pt)[{\bf t}], b\notin {\bf t}K^{\T_1}(pt)[{\bf t}]\}\,.
			$$
			The map $\lim\limits_{{\bf t} \to 0}$ extends to this ring by applying limit map to the numerator and the denominator separately.
			Analogously one can define the limit map $\lim\limits_{{\bf t}^{-1} \to 0}$.

		\end{adf}
	\begin{alemat}
		The limit map is well defined.
	\end{alemat}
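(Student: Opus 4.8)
The statement packages three well-definedness claims: (i) the subring $K^{\T_1}(F)[{\bf t}]\subset K^\T(F)$ is independent of the chosen splitting $\T\simeq\T_1\times\C^*$; (ii) the map $\lim_{{\bf t}\to0}$ on it is independent of the splitting; (iii) the set $A\subset S^{-1}K^\T(F)$ is a subring and the rule $a/b\mapsto\lim_{{\bf t}\to0}(a)/\lim_{{\bf t}\to0}(b)$ is independent of the chosen presentation $a/b$. The plan is to deduce (i)--(ii) from an analysis of how the identification $K^\T(F)\cong K^{\T_1}(F)[{\bf t},{\bf t}^{-1}]$ changes with the splitting, and then to obtain (iii) by elementary algebra over the domain $R(\T_1)$.

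First I would pin down the ambiguity in the splitting. Any two sections $s,s'$ of the sequence $1\to\C^*\to\T\to\T_1\to1$ differ by a character of $\T_1$: since $\T$ is abelian, $g\mapsto s'(g)s(g)^{-1}$ is a homomorphism $\T_1\to\C^*$, say $\phi$, so $s'(g)=\phi(g)s(g)$. Hence the two induced isomorphisms $\T_1\times\C^*\xrightarrow{\sim}\T$ differ by the automorphism $\theta_\phi\colon(g,\lambda)\mapsto(g,\phi(g)\lambda)$ of $\T_1\times\C^*$. Since $\C^*$ acts trivially on $F$, every $\T_1\times\C^*$-equivariant bundle $E$ on $F$ splits into its $\C^*$-weight bundles, and I will write $[E]=\sum_n[E_n]{\bf t}^n$ accordingly; a short computation then shows that $\theta_\phi^*$ sends this to $\sum_n[E_n]u^n{\bf t}^n$, where $u=[\phi]\in K^{\T_1}(pt)$ is the (invertible) class of the character $\phi$. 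In other words, changing the splitting acts on $K^\T(F)=K^{\T_1}(F)[{\bf t},{\bf t}^{-1}]$ by the monomial substitution ${\bf t}\mapsto u{\bf t}$ with $u$ a unit and $u^{\pm1}\in K^{\T_1}(pt)$. Such a substitution preserves the subring $K^{\T_1}(F)[{\bf t}]$ and fixes the constant term of every polynomial in ${\bf t}$, which gives (i) and (ii); applied to $K^{\T_1}(pt)[{\bf t}]$ and to the condition ``nonzero constant term'' it also shows that $A$ is defined independently of the splitting. The map $\lim_{{\bf t}^{-1}\to0}$ is treated symmetrically.

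Next I would establish (iii). Since $K^{\T_1}(pt)=R(\T_1)$ is a Laurent polynomial ring over $\Z$, hence an integral domain, for $b,b'\in K^{\T_1}(pt)[{\bf t}]$ with nonzero constant terms the product $bb'$ again has nonzero constant term; so $A$ is closed under $a/b+a'/b'=(ab'+a'b)/(bb')$ and under $aa'/(bb')$, and it contains $0$ and $1$. For well-definedness of the extended limit, suppose $a/b=a'/b'$ in $S^{-1}K^\T(F)$, so that $t(ab'-a'b)=0$ for some $t\in S=K^\T(pt)\setminus\{0\}$. Factoring out the smallest power of ${\bf t}$ occurring in $t$, write $t={\bf t}^{-m}t_0$ with $t_0\in K^{\T_1}(pt)[{\bf t}]$ of nonzero constant term; as ${\bf t}$ is a unit this gives $t_0(ab'-a'b)=0$ in $K^{\T_1}(F)[{\bf t}]$. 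The key point is that on this polynomial ring $\lim_{{\bf t}\to0}$ is simply evaluation at ${\bf t}=0$, hence a ring homomorphism; applying it yields $t_0(0)\big(\lim(a)\lim(b')-\lim(a')\lim(b)\big)=0$ with $t_0(0)\in K^{\T_1}(pt)\setminus\{0\}$, whence $\lim(a)/\lim(b)=\lim(a')/\lim(b')$ in $S_1^{-1}K^{\T_1}(F)$, where $S_1=K^{\T_1}(pt)\setminus\{0\}$. The same computation shows that $\lim_{{\bf t}\to0}\colon A\to S_1^{-1}K^{\T_1}(F)$ is a ring homomorphism.

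The one place that requires genuine care is the splitting-change computation in the second paragraph: I expect the main obstacle to be checking cleanly that the discrepancy between two identifications of $K^\T(F)$ with $K^{\T_1}(F)[{\bf t},{\bf t}^{-1}]$ is exactly the substitution ${\bf t}\mapsto u{\bf t}$ by a character-unit $u$ of $\T_1$, and nothing worse (no $K^{\T_1}(F)$-valued twist, no negative powers of ${\bf t}$). Once that is in hand, (ii) and (iii) reduce to routine bookkeeping with constant terms over the domain $R(\T_1)$.
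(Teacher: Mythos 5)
Your proof is correct and follows essentially the same route as the paper's: the splitting ambiguity is resolved by showing two sections differ by a character of $\T_1$, acting by ${\bf t}\mapsto u{\bf t}$ (the paper's $\alpha_2^{-1}\alpha_1(E{\bf t}^i)=E\otimes\C_{h^i}{\bf t}^i$), and independence of the fraction presentation is obtained by extracting the lowest ${\bf t}$-coefficient of the cross-multiplied identity and using that $K^{\T_1}(pt)$ is a domain. Your additional checks that $A$ is a subring and that the limit is a ring homomorphism are correct but are deferred in the paper to Proposition \ref{pro:lim}.
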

	\begin{proof}
		We need to show that the limit map is independent from the choice of split and representation as fraction.
		Consider two split maps $s_1,s_2: \T_1 \to \T$. Let $\alpha_1,\alpha_2$ be isomorphisms
		$$K^{\T_1}(F)[{\bf t},{\bf t}^{-1}] \to K^{\T}(F)$$
		induced by these maps. Note that the quotient $\frac{s_2}{s_1}$ induce a group homomorphism \hbox{$h:\T_1 \to \C^*$.} Direct calculation provide us with the formula 
		$$\alpha_2^{-1}\alpha_1(E{\bf t}^i)=E\otimes\C_{h^i}{\bf t}^i$$
		for $E \in K^{\T_1}(F)$. It implies independence of the limit map from a split. \\
		Now we need to check that for $\alpha\in A$ the element $\lim\limits_{{\bf t} \to 0} \alpha$ doesn't depend on a choice of representation of $\alpha$ as a fraction. Assume that we have two  representations $\alpha=\frac{a}{b}=\frac{c}{d}$ satisfying conditions from the definition of limit map. Namely
		\begin{align*}
		a=\sum_{i \ge 0} a_i{\bf t}^i, c=\sum_{i \ge 0} c_i{\bf t}^i, b=\sum_{i \ge 0} b_i{\bf t}^i, d=\sum_{i \ge 0} d_i{\bf t}^i,
		\end{align*}
		where $b_0$ and $d_0$ are nonzero and all sums are finite. Our aim is to prove that
		$$
		\frac{a_0}{b_0}=\frac{c_0}{d_0}\,.
		$$
		The equality $\frac{a}{b}=\frac{c}{d}$ implies that there exist a nonzero element
		$$s=\sum_{i \in \Z} s_i{\bf t}^i \in K^\T(pt) \,,$$
		such that
		$scb=sad$. Let $i_{min}$ denote the smallest number $i$ such that $s_i \neq 0$. Then looking at coefficient ${\bf t}^{i_{min}}$ we acquire equation
		$$s_{i_{min}}c_0b_0=s_{i_{min}}a_0d_0\,.$$
		Which proves the lemma.
	\end{proof}
	\begin{rem}
		Limit map can be defined geometrically in a split-free way.
		Consider an algebraic tours $\T$ with a chosen subtorus $\C^* \subset \T$ and the quotient torus $\T_1=\T/\C^*$. Choose a partial completion $\C^* \subset \C$. Interpret $R(\T)$ as subring of functions on algebraic torus $\T$. We have inclusions
		$$\T \subset \T\times_{\C^*} \C \supset \T\times_{\C^*} \{0\} \simeq \T_1\,.$$
		The limit map on $R(\T)$ is defined on the functions that extend to variety $\T\times_{\C^*} \C$ as a restriction to the zero fiber (which is equal to $\T_1$).
		Now assume that $F$ is a $\T$-variety with the trivial action of $\C^*$. There is natural morphism
		$$K^{\T_1}(F) \otimes_{R(\T_1)}R(\T) \to K^\T(F)\,.$$
		Any choice of a split $\T \simeq \T_1 \times \C^*$  imply that this map is an isomorphism. The limit map is defined by applying the limit map to $R(\T)$ (on a subring where its defined). To prove that this definition coincides with the one given in \ref{df:lim}, note that the choice of split induces isomorphism $\T\times_{\C^*} \C \simeq \T_1 \times \C$.
	\end{rem}
		
		Let's note some basic properties of the limit map.
	\begin{pro}
		\begin{enumerate}
			\label{pro:lim}
			\item The limit map is additive and multiplicative.
			\item It commutes with proper pushforwards (by maps between varieties with trivial $\C^*$ action).
			\item It commutes with pullbacks (by maps between varieties with trivial $\C^*$ action)
		\end{enumerate}
	\end{pro}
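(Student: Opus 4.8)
The plan is to reduce all three assertions to the single observation that, after choosing a splitting $\T\simeq\T_1\times\C^*$, the limit map restricted to the polynomial subring $K^{\T_1}(F)[{\bf t}]\subset K^\T(F)$ is exactly the ring homomorphism ``evaluation at ${\bf t}=0$'', i.e. the quotient by the ideal $({\bf t})$. Granting this, (1) is immediate on the polynomial subring, and (2), (3) become the statement that proper pushforward and pullback are compatible with the quotient by $({\bf t})$; the extension to the fraction ring $A$ will be handled in both cases by carrying the denominators (which come from a point) along via the projection formula. Throughout, independence of the chosen splitting is already covered by the preceding lemma.

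I would first prove (1). The map $\mathrm{ev}_{{\bf t}=0}\colon K^{\T_1}(F)[{\bf t}]\to K^{\T_1}(F)$ is a ring homomorphism, so $\lim_{{\bf t}\to 0}$ is additive and multiplicative on $K^{\T_1}(F)[{\bf t}]$. To pass to $A$, note that $K^{\T_1}(pt)=R(\T_1)$ is an integral domain, so if $b,d\in K^{\T_1}(pt)[{\bf t}]$ have nonzero constant terms then so does $bd$; hence $A$ is closed under sum and product (it is already asserted to be a subring in Definition~\ref{df:lim}), and for $\alpha=a/b,\ \beta=c/d\in A$ the identities $\lim(\alpha\beta)=\lim\alpha\cdot\lim\beta$ and $\lim(\alpha+\beta)=\lim\alpha+\lim\beta$ reduce, via the well-definedness of $\lim$ on $A$ (previous lemma) and the fact that $\mathrm{ev}_{{\bf t}=0}$ is a ring homomorphism, to the elementary equalities $(ac)_0=a_0c_0$, $(ad+bc)_0=a_0d_0+b_0c_0$, $(bd)_0=b_0d_0$.

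For (2) and (3), let $g\colon F\to F'$ be a proper $\T$-equivariant map (resp. $h\colon F'\to F$ a $\T$-equivariant map) between $\T$-varieties with trivial $\C^*$-action; such a map is automatically $\T_1$-equivariant. The pushforward $g_*$ (projection formula) and the pullback $h^*$ (functoriality) are $R(\T)$-linear, hence $R(\C^*)=\Z[{\bf t},{\bf t}^{-1}]$-linear, hence $\Z[{\bf t}]$-linear. Moreover they preserve the polynomial subrings $K^{\T_1}(-)[{\bf t}]$: this ring is the $\Z[{\bf t}]$-subalgebra generated by $K^{\T_1}(-)$, and the $\T_1$-equivariant pushforward (resp. pullback) of a class with trivial $\C^*$-action again has trivial $\C^*$-action, so $g_*\big(K^{\T_1}(F)\big)\subseteq K^{\T_1}(F')$ and $h^*\big(K^{\T_1}(F)\big)\subseteq K^{\T_1}(F')$. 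A $\Z[{\bf t}]$-linear map carries $({\bf t})$ into $({\bf t})$, hence descends modulo ${\bf t}$, and a check on the degree-$0$ part identifies the descended map with $g_*$ (resp. $h^*$) over $\T_1$; thus $\lim_{{\bf t}\to 0}\circ\, g_*=g_*\circ\lim_{{\bf t}\to 0}$ and $\lim_{{\bf t}\to 0}\circ\, h^*=h^*\circ\lim_{{\bf t}\to 0}$ on the polynomial subrings. To extend to $A$, write $\alpha=a/b$ with $a\in K^{\T_1}(F)[{\bf t}]$, $b\in K^{\T_1}(pt)[{\bf t}]$, $b_0\neq 0$; since $b$ is pulled back from a point, the projection formula gives $g_*(a/b)=g_*(a)/b\in A(F')$, and likewise $h^*(a/b)=h^*(a)/b\in A(F')$, so
\[
\lim_{{\bf t}\to 0}g_*(a/b)=\frac{g_*(a)_0}{b_0}=\frac{g_*(a_0)}{b_0}=g_*\!\left(\frac{a_0}{b_0}\right)=g_*\!\left(\lim_{{\bf t}\to 0}(a/b)\right),
\]
and the same computation with $h^*$ in place of $g_*$.

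I expect the only genuinely delicate point to be the claim that $g_*$ and $h^*$ respect the polynomial subrings and descend correctly modulo ${\bf t}$ — equivalently, that under the splitting they act coefficient-wise in ${\bf t}$. This rests on the compatibility of equivariant pushforward/pullback with restriction of groups $\T\to\T_1$ together with the elementary identification $K^{\T_1\times\C^*}(F)=K^{\T_1}(F)\otimes_\Z R(\C^*)$ for a trivial $\C^*$-action (as in Definition~\ref{df:lim}); everything else is bookkeeping with the projection formula and with $R(\T_1)$ being a domain.
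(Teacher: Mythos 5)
Your proposal is correct and follows essentially the same route as the paper: the paper's own proof likewise reduces points (2) and (3) to the fact that, under the splitting isomorphism $K^{\T}(X)\simeq K^{\T_1}(X)[{\bf t},{\bf t}^{-1}]$, the maps $f_*^{\T}$ and $f^*_{\T}$ act coefficient-wise on Laurent polynomials in ${\bf t}$, and treats point (1) as immediate. Your write-up is simply more explicit than the paper's (which is three lines long), in particular in carrying the denominators from $K^{\T_1}(pt)[{\bf t}]$ through via the projection formula to extend the statements from the polynomial subring to the fraction ring $A$.
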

	\begin{proof}
		First point is obvious. Second and third follows from the fact that for any \hbox{$\T$ varieties} $X,Y$ with trivial $\C^*$ action and $\T$-equivariant map $f$ between them there are isomorphisms of cohomology rings:
		$$K^{\T}(X)=K^{\T_1}(X)[{\bf t},{\bf t}^{-1}], \ K^{\T}(Y)=K^{\T_1}(Y)[{\bf t},{\bf t}^{-1}]$$
		and the maps $f^{\T}_*,f_{\T}^*$ are given by applying $f^{\T_1}_*,f^*_{\T_1}$ to the coefficients of Laurent polynomials.
	\end{proof}
	Now we can formulate the theorem.
		\begin{atw}
			\label{twABB}
			Assume that a torus $\T$ acts on a smooth compact variety $M$. Consider one dimensional subtorus $\C^* \subset \T$. Denote by $\T_1$ the quotient torus $\T/\C^*$. Let $F$ be a component of the fixed points of \hbox{the torus $\C^*$}. Let $M_F^+$ be the positive BB cell of $F$. Then for any $\T$-equivariant map $f: X \to M$ the limit map is well defined on the element
			$$ \frac{mC^{\T}(f:X\to M)_{|F}}{eu^{\T}(\nu_F)} \in S^{-1}K^T(F)\,. $$
			Moreover
			\begin{align}
			\label{f2}
			mC^{\T_1}(f_F:f^{-1}(M_F^+)\to F)=\lim_{{\bf t} \to 0}
			\left(\frac{mC^{\T}(f:X\to M)_{|F}}{eu^{\T}(\nu_F)}\right)\,.
			\end{align}
			Analogous result is true for negative BB cells and the limit map $\lim\limits_{{\bf t}^{-1} \to 0}$.
		\end{atw}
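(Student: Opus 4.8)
The plan is to reduce the statement to the absolute case treated in \cite{WeBB} by exploiting the relative Lefschetz–Riemann–Roch theorem (Theorem \ref{tw:LRR}). First I would set up the geometry: since $M$ is smooth and compact and $\C^*\subset\T$, the fixed locus $M^{\C^*}$ is smooth, $F$ is one of its components, and the BB cell $M_F^+$ fibers affinely over $F$ via the retraction $r_F\colon M_F^+\to F$, $r_F(x)=\lim_{{\bf t}\to 0}{\bf t}x$. The normal bundle $\nu_F$ of $F$ in $M$ splits under $\C^*$ as $\nu_F^+\oplus\nu_F^-$ according to positive and negative weights, and $\nu_F^+$ is the tangent bundle along the fibers of $r_F$. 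Crucially, over $F$ the Euler class $eu^{\T}(\nu_F)=eu^{\T}(\nu_F^+)\cdot eu^{\T}(\nu_F^-)$, and the factor $eu^{\T}(\nu_F^-)$ is the one whose limit as ${\bf t}\to 0$ is invertible (its $\C^*$-weights are strictly negative, so each $1-{\bf t}^{-m}\alpha^{-1}$ term tends to a unit), while $eu^{\T}(\nu_F^+)$ is the factor that encodes the affine bundle $M_F^+\to F$.

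Next I would address well-definedness of the limit. Write $X^{\T}\cap f^{-1}(F)$ as a disjoint union of fixed components $G$. By Theorem \ref{tw:LRR} applied to a $\T$-equivariant completion of $f$ (or directly, since $mC$ pushes forward along proper maps and one may compactify),
\[
\frac{mC^{\T}(f\colon X\to M)_{|F}}{eu^{\T}(\nu_F)}=\sum_{G\subset X^{\T}\cap f^{-1}(F)} f^{|G}_*\frac{mC^{\T}(\,\cdot\,)_{|G}}{eu^{\T}(\nu_G)}\,.
\]
Each summand, after localizing, lies in the subring $A$ of Definition \ref{df:lim}: the denominators are products of terms $1-{\bf t}^{\pm m}\gamma^{\mp 1}$ coming from $\C^*$-nonzero weights, and one checks that those with negative ${\bf t}$-exponent contribute a unit in the limit while those with positive exponent must be grouped with matching numerator factors. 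This is essentially the verification in \cite{WeBB} that the relevant local contributions are regular at ${\bf t}=0$; I would invoke it componentwise. Since the limit map commutes with proper pushforward by Proposition \ref{pro:lim}(2), it suffices to prove the formula for each $G$ separately, i.e. to identify $\lim_{{\bf t}\to 0}$ of the local term at $G$ with $mC^{\T_1}$ of the corresponding piece of $f^{-1}(M_F^+)$.

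The heart of the argument is the local computation. For a fixed component $G\subset X^{\T}$ lying over $F$, the preimage $f^{-1}(M_F^+)$ decomposes (as $\C^*$-varieties, up to the stratification by which BB cell of $X$ — for a chosen compatible $\C^*$-action on a completion of $X$ — a point lies in) into affine-bundle-like pieces over the $G$'s, and by the additivity of $mC$ it is enough to match one such piece. Here I would use the normalization and Verdier–Riemann–Roch properties: restricting to a $\C^*$-invariant affine neighborhood and using the coordinate description of the BB cell (positive weights give the fiber directions of $M_F^+\to F$), the ratio $mC^{\T}_{|G}/eu^{\T}(\nu_G)$ becomes a product over weight spaces, exactly as in Lemma \ref{lem:FRW}, and killing positive powers of ${\bf t}$ picks out precisely the factors contributing to $mC^{\T_1}$ of the base $G$ with its residual structure, the positive-weight directions collapsing to the Euler-class normalization of an affine bundle (which is trivial in $mC$). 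I expect the main obstacle to be precisely this step: making rigorous the identification of $f^{-1}(M_F^+)$ with the union over $G$ of these affine pieces in a way compatible with $mC$'s additivity, i.e. the relative analogue of the BB-decomposition statement, since $f$ need not be proper and $X$ need not be smooth; this requires choosing a $\T$-equivariant compactification $\bar f\colon \bar X\to M$ with $\bar X$ having a well-behaved $\C^*$-BB decomposition, applying the absolute result of \cite{WeBB} there, and then subtracting the boundary contributions using additivity — and checking the limit map respects all these subtractions. The analogous statement for the source and $\lim_{{\bf t}^{-1}\to 0}$ follows by replacing $\C^*$ with its inverse.
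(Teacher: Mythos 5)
Your architecture coincides with the paper's --- relative Lefschetz--Riemann--Roch to localize at the components $G\subset X^{\C^*}\cap f^{-1}(F)$, commutation of the limit with proper pushforward, then a local computation --- but the two steps you yourself flag as obstacles are left genuinely open, and the paper resolves them differently from what you propose. For the reduction: you suggest compactifying $f$, applying the absolute result, and subtracting boundary terms, admitting you have not checked that the limit map respects these subtractions. The paper sidesteps all of this: both sides of (\ref{f2}) are additive in $[f:X\to M]\in K^{\T}(Var/M)$, and this Grothendieck group is generated by classes of proper maps from smooth varieties (\cite{WeBB}, section 5), so one may assume $f$ proper and $X$ smooth outright. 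Then $X$ is compact, and $f^{-1}(M_F^+)=\bigsqcup_G X_G^+$ is immediate from $\lim_{t\to0}tf(x)=f(\lim_{t\to0}tx)$ --- no stratification of a completion and no boundary bookkeeping are needed. With this reduction in hand, the relative LRR and Proposition \ref{pro:lim} reduce everything to the case $f=id_X$, which is the actual content of the theorem.

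That base case is where your sketch is thinnest. One must show $\lim_{{\bf t}\to0}\bigl(mC^{\T}(id_X)_{|G}/eu^{\T}(\nu_G)\bigr)=mC^{\T_1}(X_G^+\to G)$. The paper writes the left side as $\lambda_y^{\T}(T^*G)\cdot\lambda_y^{\T}(\nu_G^*)/\lambda_{-1}^{\T}(\nu_G^*)$ and computes, weight space by weight space, that the second factor tends to $(-y)^{n^+}$, where $n^+$ is the number of positive $\C^*$-weights on $\nu_G$ (the negative-weight directions contribute $1$); the right side equals $(-y)^{\dim X_G^+}\,mC^{\T_1}(id_G)$ because $X_G^+\to G$ is a Zariski affine bundle, and an affine bundle of fiber dimension $d$ multiplies the motivic Chern class of its base by $(-y)^d$. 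Your parenthetical that the affine-bundle contribution is ``trivial in $mC$'' is therefore wrong as stated, or at least points at the wrong normalization: if the positive-weight factors really collapsed to $1$, the two sides would disagree by $(-y)^{n^+}$. Nothing in your sketch supplies the matching of these two $(-y)^{n^+}$ factors, and that matching is the whole point of the local computation.
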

	\begin{rem}
		The right hand side of formula (\ref{f2}) lives in the localized $K$-theory of a fixed points component. The Euler class $eu^{\T}(\nu_F)$ is invertible in this ring (lemma \ref{lem:eu}).
	\end{rem}
	This theorem is generalization of the theorem 10 from \cite{WeBB}.
	The proof is analogous but some formal modifications need to be done. It is the content of the next section.
	\begin{rem}
		In our case the limit map is simpler than the one in \cite{WeBB}. It is consequence of using the motivic Chern class in $K$-theory instead of the Hirzebruch class in cohomology. The equivariant Hirzebruch class lives in completed equivariant cohomology ring, so one needs to define limit of power series instead of Laurent polynomial.
	\end{rem}
	\begin{cor}
		\label{corst}
		Let $F_1$ be the sink of the BB decomposition, then
		$$
		mC^{\T_1}\left(Conf_k(F_1)\to F_1^k\right)=\lim_{{\bf t} \to 0}
		\left(mC^{\T}\left(Conf_k(M)\to M^k\right)_{|F_1^k}\right) \,.
		$$
	Analogues result hold when $F_1$ is the source.
	\end{cor}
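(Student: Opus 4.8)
The plan is to reduce the statement to Theorem \ref{twABB} applied to a suitably chosen map, and then check that the two sides of the formula in Theorem \ref{twABB} reproduce the two sides of the corollary. First I would take $X = Conf_k(M) \subset M^k$ and $f : Conf_k(M) \to M^k$ to be the inclusion, with $M^k$ carrying the diagonal $\T$-action (hence the diagonal $\C^*$-action). The smooth compact variety in Theorem \ref{twABB} is then $M^k$, and its fixed-point components for $\C^*$ are exactly the products $F_{j_1} \times \dots \times F_{j_k}$ of fixed-point components $F_{j_a} \subset M^{\C^*}$. I would specialize to the component $F_1^k$ where $F_1$ is the sink of the BB decomposition of $M$; this is legitimate because a product of $\C^*$-varieties has BB cells that are products of the BB cells of the factors, so $(M^k)^+_{F_1^k} = (M^+_{F_1})^k$ and in particular $F_1^k$ is the sink of the BB decomposition of $M^k$.

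Next I would apply Theorem \ref{twABB} directly: it gives that the limit map is well-defined on
$$
\frac{mC^{\T}\big(Conf_k(M)\to M^k\big)_{|F_1^k}}{eu^{\T}(\nu_{F_1^k})} \in S^{-1}K^{\T}(F_1^k)
$$
and that it equals $mC^{\T_1}\big(f_{F_1^k} : f^{-1}\big((M^k)^+_{F_1^k}\big)\to F_1^k\big)$. Here I need to identify the preimage $f^{-1}\big((M^k)^+_{F_1^k}\big)$: since $f$ is the inclusion of $Conf_k(M)$ into $M^k$, this preimage is $Conf_k(M) \cap (M^+_{F_1})^k$. The key geometric observation is that the sink cell $M^+_{F_1}$ retracts $\C^*$-equivariantly (indeed deformation retracts via the $\C^*$-flow) onto $F_1$, and in particular the projection $M^+_{F_1} \to F_1$ identifies $Conf_k(M) \cap (M^+_{F_1})^k$ with $Conf_k(M^+_{F_1})$. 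But since points in distinct BB cells can collide only along the closure relation and we are inside the \emph{open} sink cell, the configuration condition $x_i \ne x_j$ on $(M^+_{F_1})^k$ is the honest configuration space $Conf_k(M^+_{F_1})$. I would then use that the class $mC^{\T_1}$ is a homotopy-type-insensitive invariant only in the weak sense of the Verdier–Riemann–Roch / homotopy-invariance property already invoked in the paper for open and affine-bundle situations: the $\C^*$-flow gives $M^+_{F_1}$ the structure of an affine bundle (by Białynicki-Birula) over $F_1$, so $Conf_k(M^+_{F_1}) \to Conf_k(F_1)$ is also an affine bundle (a fibered product of $k$ copies, restricted to the configuration locus), and under the limit map / pushforward to $F_1^k$ the motivic Chern class of an affine-bundle map reduces to that of the base. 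Concretely, $mC^{\T_1}\big(Conf_k(M^+_{F_1}) \to F_1^k\big)$, after pushing to $F_1^k$ along the bundle projection, equals $mC^{\T_1}\big(Conf_k(F_1)\to F_1^k\big)$ up to the Euler-class factor that is already accounted for in the normalization of Theorem \ref{twABB}; matching these gives the claimed identity.

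The main obstacle I anticipate is precisely the bookkeeping in the last paragraph: ensuring that "$f^{-1}$ of the product sink cell, intersected with the configuration locus" really is identified with $Conf_k(F_1)$ at the level of motivic Chern classes, i.e.\ that the affine-bundle structure $M^+_{F_1}\to F_1$ passes correctly to configuration spaces and that the induced map on $mC^{\T_1}$ is the expected one. This requires knowing that $mC$ of an affine-bundle projection $E \to B$ satisfies $mC^{\T_1}(E \xrightarrow{\pi} B) = mC^{\T_1}(B \xrightarrow{id} B)$ after the appropriate normalization — which follows from additivity plus the normalization axiom applied fiberwise (the fiber $\C^{\dim M^+_{F_1}-\dim F_1}$ contributes a factor that, in the localized setting of Theorem \ref{twABB}, is exactly cancelled), together with the fact that the configuration-space construction is functorial for the base-change along $E \to B$. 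Once this lemma is in place, the corollary is just the specialization of Theorem \ref{twABB} to the inclusion $Conf_k(M) \subset M^k$ and the observation about products of BB decompositions; the source case is identical with $\lim_{{\bf t}^{-1}\to 0}$ and the negative cells.
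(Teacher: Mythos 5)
Your overall strategy --- apply Theorem \ref{twABB} to the inclusion $Conf_k(M)\subset M^k$ with $F=F_1^k$, using that the BB cells of a product are the products of the BB cells --- is the same as the paper's. But you misidentify the cell of the sink, and the reduction you build on that misidentification contains a genuine error. With the paper's conventions the sink is the \emph{minimal} component of the order, and its positive cell is trivial: $M^+_{F_1}=F_1$, hence $(M^k)^+_{F_1^k}=F_1^k$ and $f^{-1}\bigl((M^k)^+_{F_1^k}\bigr)=Conf_k(M)\cap F_1^k=Conf_k(F_1)$ on the nose. That is precisely why the corollary is stated only for the sink and the source; no affine-bundle argument about configuration spaces is needed. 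What does remain to be checked is the denominator of Theorem \ref{twABB}: the paper observes that the $\C^*$-weights on $\nu_{F_1^k}$ all have the sign forcing $\lim_{{\bf t}\to 0}eu^{\T}(\nu_{F_1^k})=1$, so the Euler class disappears in the limit. You wave at this ("already accounted for in the normalization"), but it is a step that needs the sink hypothesis and an actual computation.

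The more serious problem is the affine-bundle reduction you propose for the case where $M^+_{F_1}$ has positive rank over $F_1$: it is false. For an affine bundle $E\to B$ of positive rank, $Conf_k(E)$ does \emph{not} map to $Conf_k(B)$ --- two distinct points of $E$ may lie in the same fiber --- so $Conf_k(E)\to B^k$ is not an affine bundle over $Conf_k(B)$, and its motivic Chern class is not that of $Conf_k(B)$ up to an Euler factor. Concretely, for $M=\PP(\C^2)$ with $F_1$ a fixed point whose cell is the big cell $\C$, one has $Conf_2(\C)\cong\C\times\C^*$ nonempty while $Conf_2(F_1)=\varnothing$, so their classes differ and the identity of the corollary fails for that component. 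Your argument would nevertheless "prove" it, which localizes the gap: the statement is true only because for the sink (respectively the source, with $\lim_{{\bf t}^{-1}\to 0}$) the relevant cell is the fixed component itself.
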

	\begin{proof}
		In the theorem \ref{twABB} choose $M^k$ as a smooth variety. Let $f$ be the inclusion of configuration space and  $F:=F_1^k$ product of the sinks
		in the BB decomposition of $M$. Then the cell $B_F^+$
		is equal to $F$. Moreover the action of one dimensional torus $\C^*$ on the normal bundle $v_F$ has only positive weights.
		It follows that
		$$\lim_{{\bf t} \to 0} eu^\T(\nu_{F_1^k})=1\,.$$
		Now the corollary follows from the theorem \ref{twABB}.
	\end{proof}
	\begin{ex}
		Consider the projective space $\PP(\C^n)$ with action of the diagonal torus 
		$$\T_n=\T_{n-1} \times\C^*\alpha_n \,,$$
		where the subtorus $\T_{n-1}$ acts on the first $n-1$ coordinates, and $\C^*\alpha_n$ on the last coordinate. Action of $\C^*$ has two fixed points components: the source is a projective space $\PP(\C^{n-1})$ with natural action of $\T_{n-1}$ and the sink is a point. Choose a fixed point
		$$\ee \in \PP(\C^{n-1}) \subset \PP(\C^{n})\,.$$
		The direct computations (example \ref{ex:pr})
		shows that the class $mC^{\T_{n}}\big(Conf_{k}(\PP(\C^{n}))\big)_{|\ee}$ belongs to subring $\Z[\alpha_1^\pm,...,\alpha_{n-1}^\pm,\alpha_{n}^{-1}]
		 \subset K^{\T_{n}}(\ee)$,
		moreover
		\begin{align*}
		\lim_{\alpha_n^{-1} \to 0}\left(mC^{\T_{n}}\big(Conf_{k}(\PP(\C^{n}))\big)_{|\ee}\right)
		=mC^{\T_{n-1}}\big(Conf_{k}(\PP(\C^{n-1}))\big)_{|\ee}\,.
		\end{align*}
		As predicted by corollary \ref{corst}.
	\end{ex}
	
	\section{Proof of theorem \ref{twABB}}
	First step of the proof is reduction to the case when the map $f$ is identity on a smooth variety. Both sides of the formula (\ref{f2}) are additive with respect to addition in the Grothendieck group of varieties $K^{\T}(Var/M)$. Proper maps from smooth varieties generate Grothendieck group
	(cf. section 5, \cite{WeBB})
	so we can assume that $f$ is proper and $X$ is smooth.
	Assume that the theorem \ref{twABB} holds for $id_M$. Then for any proper map from a smooth variety:
	\begin{multline*}
	\lim_{{\bf t} \to 0}\left(\frac{mC^{\T}(f:X\to M)_{|F}}{eu^{\T}(\nu_F)}\right)=
	\lim_{{\bf t} \to 0}\left(\frac{f_*mC^{\T}(id_X)_{|F}}{eu^{\T}(\nu_F)}\right)= \\
	=\lim_{{\bf t} \to 0}\left(\sum_{G\subset X^{\C^*}\cap f^{-1}(F)}f^{|G}_*\frac{mC^{\T}(id_X)_{|G}}{eu^{\T}(\nu_G)}\right)=	\sum_{G\subset X^{\C^*}\cap f^{-1}(F)}f^{|G}_*\lim_{{\bf t} \to 0}\left(\frac{mC^{\T}(id_X)_{|G}}{eu^{\T}(\nu_G)}\right) = \\
	=\sum_{G\subset X^{\C^*}\cap f^{-1}(F)}f^{|G}_*
	mC^{\T_1}\left(X_G^+\to G\right)=\sum_{G\subset X^{\C^*}\cap f^{-1}(F)}
	mC^{\T_1}\left(X_G^+\to F\right) =\\
	= mC^{\T_1}\left(f^{-1}(M_F^+)\to F\right)
	\end{multline*}
	We have used the relative version of Lefschetz-Riemann-Roch (theorem \ref{twLRR}), commutation of the limit map with pushforwards and additivity of the limit map (proposition \ref{pro:lim}). \\ 
	To finalize we prove the formula (\ref{f2}) for $f=id_M$. The Variety $M$ is smooth so there is an equality 
	$$\frac{mC^{\T}(id_M)_{|F}}{eu^{\T}(\nu_F)}=\lambda^{\T}_y(T^*F)\frac{\lambda^{\T}_{y}(\nu^*_F)}{\lambda^{\T}_{-1}(\nu^*_F)}\,.$$
	The action of $\C^*$ on the vector bundle $T^*F$ is trivial thus
	$$\lim_{{\bf t}\to 0}\lambda^{\T}_y(T^*F)=\lambda^{\T_1}_y(T^*F)\,.$$
	\begin{alemat}[cf. corollary 19 from \cite{WeBB}
		]
		Let $n^+$ be the number of positive weights of the torus $\C^*$ on the vector bundle $v_F$, then
		$$\lim_{{\bf t}\to 0}\frac{\lambda^{\T}_{y}(\nu^*_F)}{\lambda^{\T}_{-1}(\nu^*_F)}= (-y)^{n^+}\,.$$
	\end{alemat}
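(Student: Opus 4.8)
The plan is to split the normal bundle $\nu_F$ into its $\C^*$-weight spaces and compute the limit factor by factor. Decompose the $\T$-equivariant bundle as $\nu_F = \nu_F^+ \oplus \nu_F^0 \oplus \nu_F^-$, where $\C^*$ acts with positive, zero, and negative weights respectively. Since $F$ is a component of the fixed locus of $\C^*$ the zero part $\nu_F^0$ vanishes, so $\nu_F = \nu_F^+ \oplus \nu_F^-$ with $\rank \nu_F^+ = n^+$. Because $\lambda_y$ and $\lambda_{-1}$ are multiplicative, the quotient factors as
\begin{align*}
\frac{\lambda^{\T}_{y}(\nu^*_F)}{\lambda^{\T}_{-1}(\nu^*_F)}
= \frac{\lambda^{\T}_{y}((\nu_F^+)^*)}{\lambda^{\T}_{-1}((\nu_F^+)^*)}\cdot\frac{\lambda^{\T}_{y}((\nu_F^-)^*)}{\lambda^{\T}_{-1}((\nu_F^-)^*)}\,,
\end{align*}
so it suffices to treat each factor. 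First I would handle $\nu_F^-$: every $\C^*$-weight occurring in $\nu_F^-$ is negative, hence every weight in $(\nu_F^-)^*$ is positive, so $\lambda_{-1}$ and $\lambda_y$ applied to it are units with nonzero constant term after inverting $\mathbf t$ — more precisely one checks directly on a line bundle $L\otimes\C_{\mathbf t^m}$ with $m>0$ that $\lim_{\mathbf t\to 0}(1+y\,\mathbf t^m[L])/(1-\mathbf t^m[L]) = 1$, and multiplicativity gives $\lim_{\mathbf t\to0}\lambda^{\T}_{y}((\nu_F^-)^*)/\lambda^{\T}_{-1}((\nu_F^-)^*) = 1$.

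For the positive part $\nu_F^+$ the weights of $(\nu_F^+)^*$ are negative, so the naive limit of each factor $1-\mathbf t^{-m}[L^{-1}]$ is not finite; here one first pulls out the leading power of $\mathbf t$. On a single line bundle $L\otimes\C_{\mathbf t^m}$ with $m>0$ one has
\begin{align*}
\frac{1+y\,\mathbf t^{-m}[L^{-1}]}{1-\mathbf t^{-m}[L^{-1}]}
= \frac{\mathbf t^{-m}[L^{-1}]\big(\mathbf t^{m}[L] + y\big)}{\mathbf t^{-m}[L^{-1}]\big(\mathbf t^{m}[L] - 1\big)}
= \frac{y + \mathbf t^{m}[L]}{-1 + \mathbf t^{m}[L]}\,,
\end{align*}
which after cancellation lies in the subring $A$ of Definition \ref{df:lim} and has limit $\lim_{\mathbf t\to0} = y/(-1) = -y$. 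Taking the product over the $n^+$ weight-line summands of $\nu_F^+$ yields $\lim_{\mathbf t\to0}\lambda^{\T}_{y}((\nu_F^+)^*)/\lambda^{\T}_{-1}((\nu_F^+)^*) = (-y)^{n^+}$. Multiplying the two contributions gives $(-y)^{n^+}\cdot 1 = (-y)^{n^+}$, as claimed; combining with $\lim_{\mathbf t\to0}\lambda^{\T}_y(T^*F)=\lambda^{\T_1}_y(T^*F)$ recovers $mC^{\T_1}(id_F)= \lambda^{\T_1}_y(T^*F)$ on the positive-cell side, consistent with $M_F^+ = F$ when $F$ is the sink.

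The one genuine subtlety — the point I would be most careful about — is that the $\C^*$-weights on $\nu_F^+$ and $\nu_F^-$ need not be $\pm1$ but arbitrary positive/negative integers $m$, and the line-bundle summands are not canonically split off as $\T$-equivariant bundles but only as $\C^*$-equivariant ones; one must check that the quotient $\lambda_y(\nu_F^*)/\lambda_{-1}(\nu_F^*)$, which \emph{is} $\T$-equivariantly defined, indeed lands in the subring $A$ on which $\lim_{\mathbf t\to0}$ is defined, rather than merely computing formally after a non-equivariant splitting. This is handled by noting that $A$ is defined fraction-wise and that the splitting into $\C^*$-weight spaces is automatically $\T_1$-equivariant (the weight spaces are $\T$-submodules since $\T_1$ and $\C^*$ commute), so each factor above is genuinely an element of $K^{\T_1}(F)[\mathbf t]$ divided by a unit of $K^{\T_1}(pt)[\mathbf t]$ with nonzero constant term, and Proposition \ref{pro:lim}(1) lets us pass the limit through the product. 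The negative-BB-cell statement follows by replacing $\C^*$ with its inverse, which swaps $n^+$ with $n^-$ and $\lim_{\mathbf t\to0}$ with $\lim_{\mathbf t^{-1}\to0}$.
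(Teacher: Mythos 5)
Your proof is correct and follows essentially the same route as the paper's: decompose $\nu_F^*$ into $\C^*$-weight subbundles, observe that the positive-weight factors of $\nu_F^*$ tend to $1$, and extract $(-y)$ from each negative-weight direction by factoring out the leading power of $\mathbf t$. The only cosmetic difference is that the paper avoids the line-bundle splitting issue you flag by computing directly on each weight space $E_\omega\otimes\C^{\omega\mathbf t}$ with $\lambda_y=\sum_k y^k\mathbf t^{k\omega}[\Lambda^k E_\omega]$, where the $\det E_\omega$ factors cancel to give $(-y)^{\dim E_\omega}$.
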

\begin{proof}
	All weights of the torus $\C^*$ on the vector bundle $\nu_F$ are nonzero because it is a normal bundle to the fixed point component. Consider the weight decomposition of $\nu_F^*$ according to the torus $\C^*$
	$$\nu_F^*= \bigoplus_{\omega\in \Z,\omega \neq 0} E_\omega \otimes \C^{\omega{\bf t}}\,,$$
	where $E_\omega$ are bundles over $F$ with the trivial $\C^*$ action. For $\omega>0$
	$$\lim_{{\bf t}\to 0}\frac{\lambda^{\T}_{y}(E_\omega \otimes \C^{\omega{\bf t}})}{\lambda^{\T}_{-1}(E_\omega \otimes \C^{\omega{\bf t}})}=
	\lim_{{\bf t}\to 0}\frac{\sum_k y^k{\bf t}^{k\omega}\Lambda^kE_\omega } {\sum_k (-1)^k{\bf t}^{k\omega} \Lambda^kE_\omega}=1 \,.
	$$
	On the other hand for $\omega<0$
	$$
	\lim_{{\bf t}\to 0}\frac{\sum_k y^k{\bf t}^{k\omega}\Lambda^kE_\omega } {\sum_k (-1)^k{\bf t}^{k\omega} \Lambda^kE_\omega}=
	\lim_{{\bf t}\to 0}\frac{\sum_k y^k{\bf t}^{(k-\dim(E_\omega))\omega}\Lambda^kE_\omega } {\sum_k (-1)^k{\bf t}^{(k-\dim(E_\omega))\omega} \Lambda^kE_\omega}= \frac{y^{\dim(E_\omega)} \det(E_\omega)}{(-1)^{\dim(E_\omega)} \det(E_\omega)}=(-y)^{\dim(E_\omega)}\,.
	$$
	It follows that
	$$\lim_{{\bf t}\to 0}\frac{\lambda^{\T}_{y}(\nu^*_F)}{\lambda^{\T}_{-1}(\nu^*_F)}= (-y)^{\sum\limits_{\omega<0} \dim(E_\omega)}= (-y)^{n^+}\,.$$
\end{proof}
	Thus the right hand side of the formula (\ref{f2}) for $f=id_M$ is equal to $$\lambda^{\T_1}_y(T^*F)(-y)^{dim M_F^+}=mC^{\T_1}(id_F)(-y)^{dim M_F^+}\,.$$
 But $M_F^+$ is a Zariski affine bundle over $F$ (theorem 4.3 \cite{B-B1})
 so $$mC^{\T_1}(id_F)(-y)^{dim M_F^+}=mC^{\T_1}( M_F^+ \to F)\,.$$ Which proves the theorem \ref{twABB}.
 
 \section{Appendix 1: relative localization theorems}
\begin{alemat}
	\label{lem:eu}
	Assume that a torus $\T$ acts on a smooth variety $X$. Consider a subtorus $\T_1 \subset \T$. Let $F \subset X^{\T_1}$ be a component of the fixed points set of the chosen subtorus. Then the Euler class of the normal bundle $eu(\nu_F)$ is invertible in localized $K$-theory $S^{-1}K^\T(X)$.
\end{alemat}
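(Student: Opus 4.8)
The plan is to reduce, by means of the localization theorem, to the $\T$-fixed locus of $F$, where the statement becomes an elementary computation with virtual ranks. I read the conclusion as: $eu(\nu_F)=\lambda_{-1}(\nu_F^{*})\in K^{\T}(F)$ is invertible in $S^{-1}K^{\T}(F)$, this being the form in which the lemma is used, e.g.\ in Theorem \ref{tw:LRR} and in the remark following Theorem \ref{twABB}. Since $X$ is smooth and $\T_1$ is a torus, $F$ is a smooth quasiprojective variety, and $\T$ acts on it through the quotient $\T/\T_1$. As $R(\T)=K^{\T}(pt)$ is a Laurent polynomial ring, hence a domain, inverting $S=R(\T)\setminus\{0\}$ amounts to localizing at the generic point of $\operatorname{Spec}R(\T)$; the equivariant $K$-theoretic localization theorem (\cite{ChGi}) then provides a \emph{ring} isomorphism
$$i^{*}\colon S^{-1}K^{\T}(F)\ \xrightarrow{\ \sim\ }\ S^{-1}K^{\T}(F^{\T})=\textstyle\prod_{C} S^{-1}K^{\T}(C),$$
with $C$ running over the (finitely many, smooth, connected) components of the fixed locus $F^{\T}$. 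As $i^{*}$ is a ring map and $i^{*}(eu(\nu_F))=eu(\nu_F|_C)$ on each $C$, it suffices to show that each $eu(\nu_F|_C)$ is invertible in $S^{-1}K^{\T}(C)$; hence I may assume that the $\T$-action on the base is trivial.

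On such a component $C$ one has $K^{\T}(C)=K^{0}(C)\otimes_{\Z}R(\T)$, and so $S^{-1}K^{\T}(C)=K^{0}(C)\otimes_{\Z}Q$ with $Q=\operatorname{Frac}R(\T)$. The elementary input is that the augmentation ideal $\ker(\rank\colon K^{0}(C)\to\Z)$ is nilpotent — a standard property of the $K$-theory of a finite dimensional regular variety (nilpotence of the $\gamma$-filtration). Consequently the ring homomorphism $\rho\colon S^{-1}K^{\T}(C)\to Q$ induced by $\rank$ has nilpotent kernel and field target, so an element of $S^{-1}K^{\T}(C)$ is a unit exactly when its image under $\rho$ is nonzero. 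Because $F\subset X^{\T_1}$, the normal bundle $\nu_F$ has no $\T_1$-fixed summand; restricting to $C$ and splitting into $\T$-isotypic pieces gives $\nu_F|_C=\bigoplus_{\chi}N_{\chi}\otimes\C_{\chi}$ with $N_{\chi}\in K^{0}(C)$ non-equivariant and every character $\chi$ satisfying $\chi|_{\T_1}\neq 0$, in particular $\chi\neq 1$. Then $eu(\nu_F|_C)=\prod_{\chi}\lambda_{-1}(N_{\chi}^{*}\otimes\C_{\chi^{-1}})$, and a direct computation yields $\rho\big(\lambda_{-1}(N_{\chi}^{*}\otimes\C_{\chi^{-1}})\big)=\sum_{i}(-1)^{i}\binom{\rank N_{\chi}}{i}\chi^{-i}=(1-\chi^{-1})^{\rank N_{\chi}}$. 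Hence $\rho(eu(\nu_F|_C))=\prod_{\chi}(1-\chi^{-1})^{\rank N_{\chi}}$, which is nonzero since each $1-\chi^{-1}$ is a nonzero element of the domain $R(\T)=\Z[\widehat{\T}]$ (as $\chi\neq 1$). Therefore $eu(\nu_F|_C)$ is a unit, and by the first step so is $eu(\nu_F)$.

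The step I expect to need genuine care is the reduction in the first paragraph. One cannot run the rank argument directly on $F$: over a base carrying a nontrivial residual $\T/\T_1$-action the classes of $\T$-equivariant line bundles need not be congruent to $1$ modulo a nilpotent ideal, so the tempting factorization of $eu(\nu_F)$ over the $\T_1$-weight spaces as (a character difference)$\,\cdot\,$(a unit) breaks down. Passing to the $\T$-fixed locus, which is legitimate precisely because we have inverted all of $R(\T)\setminus\{0\}$, removes this equivariant content and makes the rank argument go through; everything after that reduction is purely formal.
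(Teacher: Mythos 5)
Your proof is correct and follows essentially the same route as the paper: reduce by localization to the $\T$-fixed locus of $F$ (the paper phrases this as ``an element of $S^{-1}K^\T(F)$ is invertible iff its restriction to every component of $F^\T$ is'', proved by writing down the inverse via the localization formula), and then use that a bundle with no trivial weights over a component with trivial action has invertible Euler class. The only difference is that the paper cites this last fact as classical (Prop.~5.10.3 of \cite{ChGi}), whereas you prove it via the rank homomorphism and nilpotence of the augmentation ideal of $K^0(C)$ --- which is exactly the standard argument behind that citation.
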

\begin{proof}
	The lemma follows from slightly more general fact. Namely for a smooth $\T$-variety $Y$ element $a \in S^{-1}K^\T(Y)$ is invertible if and only if it is invertible after restriction to each fixed points component. \\
	The "only if" part is trivial. Let's prove the "if" part. Assume that for every component $H\subset X^\T$ element $i^*_H a$ has inverse $a_H$. Consider the element $$\tilde{a}=\sum_{H \subset X^{\T}} i_*^H\frac{a_H}{eu^\T(\nu_H)} \in S^{-1}K^\T(Y)\,.$$
	Then for every component $G\subset X^\T$ restriction
	$$i^*_G(a\tilde{a})=i^*_G(a)\sum_{H \subset X^{\T}} i^*_Gi_*^H\frac{a_H}{eu^\T(\nu_H)}=i^*_G(a)a_G=1\,.$$
	Then the localization theorem (\cite{Tho}, theorem 2.1) implies that $a\tilde{a}=1$. \\
	To complete the proof of the lemma consider the element $eu(\nu_F) \in S^{-1}K^\T(F)$.
	Bundle $\nu_F$ restricted to the fixed points set has all weights nontrivial. It is a classical fact that such bundle has invertible Euler class at fixed points (proposition 5.10.3 \cite{ChGi}). Thus $eu(\nu_F)$ is invertible in $S^{-1}K^\T(F).$
\end{proof}
 \begin{atw}[relative first localization theorem cf. \cite{Tho} theorem 2.1, or \cite{Se} proposition 4.1 for topological case] 
 	Assume that a torus $\T$ acts on a smooth variety $X$. Let $X^{\T_1}$ denote fixed points of a subtorus $\T_1 \subset \T$. Then pullback map in localized $K$-theory
 	$$
 	S^{-1}K^\T(X)\to S^{-1}K^\T(X^{\T_1})
 	$$
 	is an isomorphism.
 \end{atw}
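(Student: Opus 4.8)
The plan is to deduce the statement from the absolute localization theorem (Thomason's Theorem~2.1, in the pullback-isomorphism form already used in the proof of Lemma~\ref{lem:eu}, i.e.\ the special case $\T_1=\T$), applied twice. First recall that, since $\T$ acts on the smooth variety $X$ and $\T_1$ is a subtorus, the fixed locus $X^{\T_1}$ is a smooth closed $\T$-invariant subvariety (in characteristic zero the fixed locus of a torus acting on a smooth variety is smooth), and likewise $X^{\T}$ is smooth and closed. Since $X^{\T}\subset X^{\T_1}$, a point of $X^{\T_1}$ is $\T$-fixed precisely when it lies in $X^{\T}$, so $(X^{\T_1})^{\T}=X^{\T}$. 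Write $\ell\colon X^{\T}\hookrightarrow X^{\T_1}$ and $i\colon X^{\T_1}\hookrightarrow X$ for the inclusions and $j:=i\circ\ell\colon X^{\T}\hookrightarrow X$.

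Applying the absolute localization theorem to the smooth $\T$-variety $X$ shows that the pullback $S^{-1}j^{*}\colon S^{-1}K^{\T}(X)\to S^{-1}K^{\T}(X^{\T})$ is an isomorphism, and applying it to the smooth $\T$-variety $X^{\T_1}$, whose $\T$-fixed locus is $X^{\T}$, shows that $S^{-1}\ell^{*}\colon S^{-1}K^{\T}(X^{\T_1})\to S^{-1}K^{\T}(X^{\T})$ is an isomorphism. All of $i^{*},\ell^{*},j^{*}$ are homomorphisms of modules over $R(\T)=K^{\T}(pt)$, so localization is compatible with the functoriality identity $j^{*}=\ell^{*}\circ i^{*}$, giving $S^{-1}j^{*}=S^{-1}\ell^{*}\circ S^{-1}i^{*}$. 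Since $S^{-1}j^{*}$ and $S^{-1}\ell^{*}$ are isomorphisms, $S^{-1}i^{*}=(S^{-1}\ell^{*})^{-1}\circ S^{-1}j^{*}$ is an isomorphism, which is the claim.

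In this approach there is essentially no obstacle beyond bookkeeping: the only points needing attention are the smoothness of $X^{\T_1}$ (so that it is a legitimate input to the absolute theorem and $K^{\T}=G^{\T}$ on it) and the fact that the cited absolute theorem does hold for the localizing set $S=K^{\T}(pt)\setminus\{0\}$, yielding an isomorphism onto the $K$-theory of the \emph{full} $\T$-fixed locus. If instead one wants a proof that does not quote the pullback form, one argues directly: set $U:=X\setminus X^{\T_1}$, a $\T$-invariant open with $U^{\T_1}=\emptyset$, and prove $S^{-1}K^{\T}(U)=0$ (and likewise for the higher equivariant $K$-groups of $U$) by Thomason-style dévissage — stratify $U$ into $\T$-invariant locally closed pieces whose orbits have type $\T/\T'$ with $\T_1\not\subseteq\T'$, and on each such orbit pick a character $\chi$ of $\T$ trivial on $\T'$ but nontrivial on $\T_1$, so that $\chi-1\in S$ annihilates $K^{\T}(\T/\T')=R(\T')$. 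The localization long exact sequence for $X^{\T_1}\hookrightarrow X\hookleftarrow U$ then forces $S^{-1}i_{*}$ to be an isomorphism, and combining this with the self-intersection formula (so that $i^{*}i_{*}$ is multiplication by $eu(\nu_{X^{\T_1}})$) and the invertibility of $eu(\nu_{X^{\T_1}})$ — which follows as in Lemma~\ref{lem:eu}, the normal bundle having nontrivial $\T$-weights on every $\T$-fixed component — yields that $S^{-1}i^{*}$ is an isomorphism. In that route the vanishing $S^{-1}K^{\T}(U)=0$ is the main obstacle.
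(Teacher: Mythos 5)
Your main argument is exactly the paper's proof: the paper also deduces the statement from the absolute localization theorem via the commuting triangle $S^{-1}K^\T(X)\to S^{-1}K^\T(X^{\T_1})\to S^{-1}K^\T(X^{\T})$, using that $(X^{\T_1})^{\T}=X^{\T}$ and a two-out-of-three argument. The additional d\'evissage route you sketch is not needed and is not what the paper does.
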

 \begin{proof}
 	Observe that the diagram
 	$$
 	\xymatrix{
 	S^{-1}K^\T(X) \ar[d]^-\simeq \ar[r]& S^{-1}K^\T(X^{\T_1}) \ar[ld]^-{\simeq} \\
 	S^{-1}K^\T(X^{\T})
 	}
 	$$
 	commutes. From the first localization theorem
 	two out of three maps on the diagram are isomorphisms. Thus third map is also isomorphism.
 \end{proof}
 \begin{atw}[relative localization formula cf. \cite{AtBo1,BeVe} for cohomology version] 
	Assume that a torus $\T$ acts on a smooth variety $X$. Let $X^{\T_1}$ denote the fixed points of subtorus $\T_1\subset \T$. Then any element $\alpha \in S^{-1}K^\T(X)$ can be recovered from its restriction to the fixed points set using formula
	$$ \alpha=\sum_{F \subset X^{\T_1}}i^F_*\frac{i^*_F\alpha}{eu^\T(\nu_F)}\,. $$
\end{atw}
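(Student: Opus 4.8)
The plan is to deduce this formula from the relative first localization theorem proved just above, by the same argument as in the classical case $\T_1=\T$. Since the pullback $S^{-1}K^\T(X)\to S^{-1}K^\T(X^{\T_1})$ is an isomorphism, in particular injective, it is enough to check that the proposed right hand side restricts to $i^*_F\alpha$ on each component $F\subset X^{\T_1}$. One should first record that the right hand side is a legitimate element of $S^{-1}K^\T(X)$: there are only finitely many fixed components, each $F$ is smooth (fixed loci of torus actions on smooth varieties are smooth), the closed embedding $i_F\colon F\hookrightarrow X$ is proper and regular so that $i^F_*$ is defined, and $eu^\T(\nu_F)=\lambda_{-1}(\nu_F^*)$ is invertible in $S^{-1}K^\T(F)$ by Lemma \ref{lem:eu}.

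First I would compute $i^*_F$ of a single summand $i^G_*\!\left(i^*_G\alpha/eu^\T(\nu_G)\right)$. If $G\neq F$, then $F$ and $G$ are disjoint closed subvarieties, so the base-change (projection) formula for the regular embeddings $i_F,i_G$ gives $i^*_F\circ i^G_*=0$, the composition factoring through the $K$-theory of the empty intersection. If $G=F$, the self-intersection formula for the regular embedding $i_F$ with normal bundle $\nu_F$ gives $i^*_F i^F_*(\beta)=\lambda_{-1}(\nu_F^*)\,\beta=eu^\T(\nu_F)\,\beta$ for all $\beta\in S^{-1}K^\T(F)$ (cf. \cite{ChGi}, Prop. 5.4.10). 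Taking $\beta=i^*_F\alpha/eu^\T(\nu_F)$ and using invertibility of the Euler class, only the $G=F$ term survives and contributes exactly $i^*_F\alpha$.

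Hence the right hand side and $\alpha$ have the same image under the (injective) pullback to $S^{-1}K^\T(X^{\T_1})$, and therefore coincide, which proves the formula. Alternatively one may bypass the direct computation by invoking the already-established $\T$-fixed-point localization formula together with the isomorphism $S^{-1}K^\T(X^{\T_1})\simeq S^{-1}K^\T(X^{\T})$ used in the proof above: grouping the $\T$-fixed components of $X$ according to the $\T_1$-fixed component that contains them, transitivity of pushforward along $X^\T\hookrightarrow X^{\T_1}\hookrightarrow X$ and the self-intersection formula for $X^\T\cap F\hookrightarrow F$ repackage the $\T$-formula into the stated relative one.

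I expect the only real obstacle to be bookkeeping: checking that all the $K$-theoretic pushforwards and the self-intersection and disjointness identities are valid for a possibly non-complete smooth $X$ (properness and regularity of the closed embeddings $i_F$, finiteness of the set of fixed components). Once these points are in place, the argument is a verbatim repetition of the proof of the usual localization formula.
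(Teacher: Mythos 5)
Your proposal is correct and follows essentially the same route as the paper: the paper likewise reduces to checking that both sides agree after restriction to $X^{\T_1}$ (which you make explicit via the disjointness and self-intersection identities) and then concludes by the relative first localization theorem, having noted invertibility of $eu^\T(\nu_F)$ from Lemma \ref{lem:eu}. Your write-up just supplies more of the routine details than the paper does.
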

\begin{proof} 
	Note that Euler class $eu^\T(\nu_F)$ is invertible in the localized $K$-theory $S^{-1}K^\T(F)$ (lemma \ref{lem:eu}).
	We use the same reasoning as in the classical localization formula. Namely, both sides of the formula are equal after restriction to the fixed points set $X^{\T_1}$. The claim follows from relative first localization theorem.
\end{proof}

\begin{atw}[relative Lefschetz-Riemann-Roch \ref{tw:LRR}, cf. theorem 5.11.7 \cite{ChGi}]
		\label{twLRR}
		Assume that a torus $\T$ acts on smooth varieties $X$ and $Y$. Consider a subtorus $\T_1\subset \T$. Let $F \subset Y^{\T_1}$ be a component of the fixed points of \hbox{the torus $\T_1$}. For any proper $\T$-equivariant map $f:X \to Y$ and element $\alpha \in S^{-1}K^\T(X)$ pushforward $f_*\alpha$ can be computed using an equality
		$$
		\frac{i_F^*f_*\alpha}{eu^{\T}(\nu_F)}=\sum_{G\subset X^{\T_1}\cap f^{-1}(F)}f^{|G}_*\frac{i_G^*\alpha}{eu^{\T}(\nu_G)}\,.
		$$
		Where the sum is indexed by the $\T_1$-fixed points components of $X$ which lie in the preimage of $F$.
	\end{atw}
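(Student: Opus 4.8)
The plan is to follow verbatim the derivation sketched in Remark \ref{rem:LRR} for the $\T$-fixed case, substituting the relative localization formula established above for the classical one, and to check that the argument never uses more than the $\T_1$-equivariance of $f$ together with the $\T$-equivariance of the embeddings $i_F$ and $i_G$.

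First I would apply the relative localization formula to $\alpha \in S^{-1}K^\T(X)$ to write
$$\alpha = \sum_{G \subset X^{\T_1}} i^G_* \frac{i_G^*\alpha}{eu^\T(\nu_G)}\,,$$
which is legitimate because each $eu^\T(\nu_G)$ is invertible in $S^{-1}K^\T(G)$ by Lemma \ref{lem:eu}. Then I would push this equality forward along the proper map $f$. Since $f$ is $\T_1$-equivariant, the image $f(G)$ of a component $G$ of $X^{\T_1}$ is a connected $\T_1$-fixed subvariety, hence lies in a unique component $H(G) \subset Y^{\T_1}$; the induced map $f^{|G}: G \to H(G)$ is proper (being the restriction of $f$ to a closed subvariety), and the evident square built from $i_G$, $i_{H(G)}$, $f$, $f^{|G}$ commutes. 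Thus $f_* i^G_* = i^{H(G)}_* f^{|G}_*$ and
$$f_*\alpha = \sum_{G \subset X^{\T_1}} i^{H(G)}_*\, f^{|G}_*\, \frac{i_G^*\alpha}{eu^\T(\nu_G)}\,.$$

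Next I would restrict to $F$ by applying $i_F^*$, using that for two distinct (hence disjoint, closed) components $F \ne H$ of $Y^{\T_1}$ one has $i_F^* i^H_* = 0$, while the self-intersection formula gives $i_F^* i^F_*(\beta) = eu^\T(\nu_F)\cdot\beta$. Only the terms with $H(G) = F$ survive, and those $G$ — connected, hence mapping either into $F$ or into its complement inside $Y^{\T_1}$ — are precisely the components of $X^{\T_1}\cap f^{-1}(F)$. Hence
$$i_F^* f_*\alpha = eu^\T(\nu_F)\sum_{G \subset X^{\T_1}\cap f^{-1}(F)} f^{|G}_*\, \frac{i_G^*\alpha}{eu^\T(\nu_G)}\,,$$
and dividing by $eu^\T(\nu_F)$, invertible in $S^{-1}K^\T(F)$ again by Lemma \ref{lem:eu}, yields the claimed identity.

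The one step deserving care — and the main obstacle — is the self-intersection identity $i_F^* i^F_*(\beta) = eu^\T(\nu_F)\cdot\beta$ in the relative setting: one needs that the normal bundle $\nu_F$ of the $\T$-equivariant regular closed embedding $i_F$ carries a $\T$-action and that the excess-intersection (clean-intersection) formula in equivariant $K$-theory applies without change. This is exactly Proposition 5.4.10 of \cite{ChGi}, whose hypotheses only involve $i_F$ being a $\T$-equivariant closed embedding of smooth varieties and do not distinguish $\T$-fixed from $\T_1$-fixed loci. Granting it, everything above is purely formal, precisely as in Remark \ref{rem:LRR}.
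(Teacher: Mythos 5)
Your proposal is correct and follows essentially the same route as the paper's own proof: apply the relative localization formula, push forward using $f_*i^G_*=i^{f(G)}_*f^{|G}_*$, restrict to $F$ so that only components mapping into $F$ survive via the self-intersection formula, and divide by the invertible Euler class. Your explicit attention to the self-intersection identity (Prop.~5.4.10 of \cite{ChGi}) and to why only the components of $X^{\T_1}\cap f^{-1}(F)$ contribute simply spells out what the paper leaves implicit.
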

 \begin{proof}
 	We use the same reasoning as in LRR theorem (remark \ref{rem:LRR}). Namely, for any \hbox{$\alpha \in S^{-1}K^\T(X)$}
 	$$
 	i_F^*f_*\alpha=i_F^*f_*\sum_{G \subset X^{\T_1}}i^G_*\frac{i^*_G\alpha}{eu^\T(\nu_G)}
 	=\sum_{G \subset X^{\T_1}}i_F^*i^{f(G)}_*f^{|G}_*\frac{i^*_G\alpha}{eu^\T(\nu_G)}
 	=eu(\nu_F)\sum_{G\subset X^{\T_1}\cap f^{-1}(F)} \frac{i^*_G\alpha}{eu^\T(\nu_G)}\,.
 	$$
 	After dividing both sides by $eu(\nu_F)$ we obtain the proof of the theorem.
 \end{proof}

\section{Appendix 2: Combinatorics} \label{s:combi}
Consider a finite set $\Omega$ and its subsets $X_1,X_2,...,X_m$. Denote by $P([m])$ the power set of the set $[m]$. For $A \in P([m])$ let
 $$X_A=\begin{cases}
 \bigcap_{i\in A}X_i \text{ when } A \neq \varnothing\,, \\
 \Omega \text{ when } A = \varnothing\,. \\
 \end{cases} $$
 The Inclusion-Exclusion formula states that
$$|\bigcup_{i=1}^m X_i|=\sum_i |X_i| -\sum_{i\neq j}|X_i\cap X_j|+\sum_{i\neq j,j\neq k,k\neq i}|X_i\cap X_j\cap X_k|-...=\sum_{A\in P([m]), A\neq \varnothing}(-1)^{|A|-1}|X_A|\,. $$
It implies that
$$|\Omega- \bigcup_{i=1}^n X_i|= \sum_{A\in P([m])}(-1)^{|A|}|X_A|\,.$$
This formula has its motivic counterpart .
Consider an algebraic $\T$-variety $\Omega$ and closed \hbox{$\T$-subvarieties} $X_1,X_2,...,X_m$. For an element $A \in P([m])$ consider subvarieties $X_A$ defined as above. Then we have an equality in the Grothendieck group $K^\T(Var/\Omega)$
$$[\Omega- \bigcup_{i=1}^m X_i\subset \Omega]= \sum_{A\in P([m])}(-1)^{|A|}[X_A \subset \Omega]\,.$$
For a smooth $\T$ variety $B$ consider $\Omega=B^k$. Denote by $\left[\binom{k}{2} \right]$ the set of unordered pairs $(i,j)$ such that $i,j \in [k], i \neq j.$ Consider  closed subvarieties
$$X_{ij}=\{(x_1,...,x_k)\in B^k| x_i = x_j\} \subset B^k  $$
for every pair $(i,j) \in \left[\binom{k}{2} \right]$. The Inclusion-exclusion formula implies that
$$[Conf_k(B) \subset B^k]=[B^k- \bigcup X_{ij}\subset B^k]=
\sum_{A\in P\left(\left[\binom{k}{2}\right]\right)}
(-1)^{|A|}[X_A \subset B^k]\,.$$
A subset $A\subset \left[\binom{k}{2}\right]$ can be visualised as a graph $G_A$ whose set of vertices is $[k]$ and whose set of edges is $A\subset \left[\binom{k}{2}\right]$.
After such interpretation the subvariety $X_A$ consist of tuples $(x_1,...,x_k) \in B^k$ such that $x_i=x_j$ when vertices $\bf{i},\bf{j}$ of the graph $G_A$ are connected by an edge. The equality relation is transitive so the set $X_A$ depends only on the partition induced on the set $[k]$ by connected components of the graph $G_A$. For a partition $P$ of the set $[k]$ consider subvariety
$$B_P=\{(x_1,...,x_k)\in B^k| x_i = x_j \text{ when } P(i)=P(j) \}\,.$$
Moreover denote by $G(P)$ the set of graphs with the set of vertices equal to the set $[k]$ whose connected components induce partition $P$ on the set $[k]$. Define
$$a(P)=\sum_{G\in G(P)} (-1)^{|E(G)|}\,.$$
It follows that
$$[Conf_k(B) \subset B^k]=
\sum_{A\in P\left(\left[\binom{k}{2}\right]\right)}
(-1)^{|A|}[X_A \subset B^k]=\sum_{P\in X_k}a(P)[B_P\to B^k]\,.$$
The numbers $a(P)$ can be computed using the following simple formula.
\begin{alemat}
	\label{lem:aP}
	$$a(P)=\prod_{P_i \in P} (-1)^{|P_i|-1}(|P_i|-1)!$$
\end{alemat}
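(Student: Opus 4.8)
The plan is to count the graphs in $G(P)$ by working one block of the partition at a time. Since the connected components of a graph $G\in G(P)$ must induce precisely the partition $P$ on $[k]$, such a graph is the same data as a choice, for each block $P_i\in P$, of a \emph{connected} graph on the vertex set $P_i$. Edges never go between distinct blocks (otherwise those blocks would be merged into one component), and the choices in different blocks are independent. Hence
$$a(P)=\sum_{G\in G(P)}(-1)^{|E(G)|}=\prod_{P_i\in P}\Big(\sum_{\text{$H$ connected graph on $P_i$}}(-1)^{|E(H)|}\Big),$$
so everything reduces to the single-block identity: for a finite set $S$ with $|S|=m$,
$$c(m):=\sum_{\text{$H$ connected graph on $S$}}(-1)^{|E(H)|}=(-1)^{m-1}(m-1)!\,.$$

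First I would record the base cases $c(1)=1$ and $c(2)=-1$, then prove $c(m)=(-1)^{m-1}(m-1)!$ by induction on $m$. The cleanest route is an exponential-generating-function / inclusion–exclusion argument. Let $g(m):=\sum_{H\text{ graph on }[m]}(-1)^{|E(H)|}$ be the signed count over \emph{all} graphs on $m$ labelled vertices (connected or not); summing over the $\binom{m}{2}$ potential edges independently gives $g(m)=(1-1)^{\binom m2}=[m\le 1]$, i.e. $g(0)=g(1)=1$ and $g(m)=0$ for $m\ge 2$. Splitting an arbitrary graph on $[m]$ according to the vertex set $B\ni 1$ of the component containing the vertex $1$ yields the recursion $g(m)=\sum_{b=1}^{m}\binom{m-1}{b-1}c(b)\,g(m-b)$; equivalently, in terms of EGFs, $G(x)=e^{C(x)}$ where $G(x)=\sum g(m)x^m/m!=1+x$ and $C(x)=\sum_{m\ge1}c(m)x^m/m!$. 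Therefore $C(x)=\log(1+x)=\sum_{m\ge1}(-1)^{m-1}(m-1)!\,x^m/m!$, which is exactly $c(m)=(-1)^{m-1}(m-1)!$.

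Plugging this back into the product formula over blocks gives $a(P)=\prod_{P_i\in P}(-1)^{|P_i|-1}(|P_i|-1)!$, as claimed. An alternative to the generating-function step, if one prefers to stay elementary, is a direct induction: fix a vertex $v\in S$ and sum over the (nonempty) set $T$ of neighbours of $v$ among the remaining $m-1$ vertices, noting that deleting $v$ leaves a graph on $S\setminus\{v\}$ whose components are free to be anything as long as adding $v$ back reconnects them — this leads to the same recursion after a short binomial manipulation. I expect the main obstacle to be purely bookkeeping: making the bijection "graph with component partition $P$" $\leftrightarrow$ "tuple of connected graphs on the blocks" airtight (in particular that no edges cross blocks and that connectedness on each block is both necessary and sufficient), and then handling the convolution recursion for $c(m)$ cleanly; there is no real conceptual difficulty, and as the remark following \eqref{wyr:r2} notes, the identity is also the value of the Möbius function of the partition lattice, which could be cited instead.
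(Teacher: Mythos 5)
Your proposal is correct, and its first step (reducing $a(P)$ to a product over blocks of the signed count $c(m)$ of connected graphs on $m$ labelled vertices) is exactly the multiplicativity reduction the paper makes. Where you genuinely diverge is in evaluating $c(m)$. The paper fixes the potential edge $\{\mathbf{1},\mathbf{2}\}$ and uses a sign-reversing involution (toggling that edge) to cancel all connected graphs except those in which $\{\mathbf{1},\mathbf{2}\}$ is a bridge; decomposing such a graph into the two components obtained by deleting the bridge yields the convolution recursion $b_k=-\sum_{i=1}^{k-1}\binom{k-1}{i-1}b_ib_{k-i}$, which is then verified for $(-1)^{k-1}(k-1)!$ by induction. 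You instead observe that the signed count over \emph{all} graphs is trivially $(1-1)^{\binom{m}{2}}=[m\le 1]$, and invoke the exponential formula $G(x)=e^{C(x)}$ (equivalently, your component-of-vertex-$1$ recursion) to conclude $C(x)=\log(1+x)$ and read off the coefficients. Your route buys a shorter, computation-free identification of $c(m)$ once the exponential formula is accepted, and it makes the appearance of $\log(1+t)$ in the generating series of Section \ref{s:szer} transparent; the paper's route stays entirely elementary, needing only the binomial recursion and induction, at the cost of checking that the proposed closed form satisfies the recursion. Both arguments are complete, and as you and the paper both note, the identity is also just the M\"obius function of the partition lattice.
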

 It is well known fact for specialists but for completeness we give a proof.
\begin{proof}
	When a given partitions $P\in X_k$ and $Q \in X_l$ and bijection $[k]\sqcup [l] \to [k+l]$ we may consider disjoint sum partition $P\sqcup Q \in X_{k+l}$.
	Both sides of the desired formula are multiplicative with respect to the disjoint sum of partitions. Thus it is enough to prove the lemma for the partition with only one element. Namely we need to show that 
	$$(-1)^{k-1}(k-1)!=\sum_{G \text{ connected graph}} (-1)^{|E(G)|}\,.$$
	Denote the left hand side by $b_k$. We proceed by induction. \\
	For $k=1$  $$b_1=1=(-1)^{k-1}(k-1)!\,.$$
	Assume $k \ge 2$. There is a bijection between the set of connected graphs which don't contain the edge $[{\bf 1},{\bf 2}]$ and the set of connected graphs which contain the edge $[{\bf 1},{\bf 2}]$ and removal of this edge won't split graph. This bijection (adding/removing edge) changes parity of $|E(G)|$. So we can sum over 
	the set of connected graphs which contain the edge $[{\bf 1},{\bf 2}]$ and removal of this edge splits graph. Such graph can be divided into two disjoint connected subgraphs (one containing the vertex {\bf 1}, and the other containing the vertex~{\bf 2})  connected by only one edge $[{\bf 1},{\bf 2}]$. Such reasoning leads us to the recursive formula for $b_k$
	$$b_k=(-1)\sum_{i=1}^{k-1} \binom{k-1}{i-1}b_ib_{k-i}\,,$$
	where $i$ corresponds to the number of vertices which are in the connected component of the vertex~${\bf 1}$ after removing the edge $[{\bf 1},{\bf 2}]$. The lemma follows from this formula by induction.
\end{proof}

	\begin{ex}
	Consider a configuration space $Conf_3(B) \subset B^3$. Denote coordinates in $B^3$ by $x,y,z$. Then
	\begin{align*}
		&[Conf_3(B)]=[B^3]-[x=y]-[x=z]-[z=x]+3[x=y=z]-[x=y=z]= \\
		&=[B_{\{\{1\},\{2\},\{3\}\}}]-[B_{\{\{1,2\},\{3\}\}}]-[B_{\{\{1,3\},\{2\}\}}]-[B_{\{\{1\},\{2,3\}\}}]+2[B_{\{\{1,2,3\}\}}] \,.
	\end{align*} 
	The term $3[x=y=z]$ corresponds to imposing two equality conditions on three elements and the term  $(-1)[x=y=z]$ to imposing all three equality conditions.
\end{ex}

\begin{rem}
	As already pointed out before, the equality for $a(P)$
	just comes from the M\"obius function of the partition lattice (see \cite{Stan}, example 3.10.4).
	There is natural action of the permutation group $S_k$ on the product $B^k$.
	We may pushforward equation~(\ref{wyr:r1}) to  the symmetric product $M^k/S_k$.  Then the class $[B_P\to M^k/S_k]$ depends only on the number partition corresponding to $P$.
	But our	formula doesn't take the natural \hbox{$S_k$-action}
	into account. For other approaches taking care of $S_k$-action see \cite{Get,Get2,MS2,MS3}
	.
\end{rem}

\newcommand{\etalchar}[1]{$^{#1}$}

\end{document}